\documentclass{article}
\usepackage{graphicx} % Required for inserting images
\usepackage[normalem]{ulem}
\usepackage{amsmath, amssymb,amsthm}
\usepackage{soul}
\usepackage{pgf,tikz,calc}
\usepackage{enumitem}
\usepackage[normalem]{ulem}
\usepackage[colorlinks=true, linkcolor=blue]{hyperref}
\usepackage{verbatim}
\newtheorem{theorem}{Theorem}
\newtheorem{proposition}[theorem]{Proposition}
\newtheorem{corollary}[theorem]{Corollary}

\newtheorem{observation}[theorem]{Observation}
\newtheorem{claim}{Claim}

\newtheorem{open}{Open problem}
\newtheorem{conj}{Conjecture}
\usetikzlibrary{fit,backgrounds}

\usepackage{amsthm}
\theoremstyle{definition}
\newtheorem{definition}{Definition}

\newcommand{\smallqed}{{\tiny ($\Box$)}}
\newcommand{\mc}{\overline{\chi}_{\geqslant}}
\newcommand{\cg}{{\rm C}_{\geqslant}}

\newcommand{\cP}{{\cal P}}

\newcommand{\cH}{{\cal H}}
\newcommand{\cE}{{\cal E}}

\usepackage[top=1in, bottom=1.25in, left=1.35in, right=1.35in]{geometry}

\setlist[description]{style=nextline}

\newcommand{\hf}[1]{ \textcolor{orange}{HF: #1}}

\usepackage[dvipsnames]{xcolor}

\DeclareMathOperator{\dist}{dist}

\begin{document}

\title{Majority C-coloring of graphs}
\author{
Csilla Bujt\'as $^{a,b,}$\thanks{Email: \texttt{csilla.bujtas@fmf.uni-lj.si}}
\and
Magda Dettlaff $^{c,}$\thanks{Email: \texttt{magda.dettlaff@ug.edu.pl}}
\and
Hanna Furma\'nczyk $^{c,}$\thanks{Email: \texttt{hanna.furmanczyk@ug.edu.pl}}
\and
Aleksandra Laskowska $^{c,}$\thanks{Email: \texttt{aleksandra.laskowska@ug.edu.pl}}
}
\date{}
\maketitle

\begin{center}
$^a$ Faculty of Mathematics and Physics, University of Ljubljana, Slovenia\\
\medskip

$^b$ Institute of Mathematics, Physics and Mechanics, Ljubljana, Slovenia\\
\medskip

$^c$ Faculty of Mathematics, Physics and Informatics, University of Gda\'nsk, Poland\\
\medskip
\end{center}
\maketitle

\begin{abstract}
Inspired by the majority colorings and C-colorings, we introduce and study the majority C-coloring of graphs. In such a vertex coloring, every vertex shares its color with at least half of its neighbors. The maximum number of colors that can be used in a majority C-coloring of a graph $G$ is called the majority C-chromatic number and denoted by $\mc(G)$. 

An upper bound on $\mc(G)$ is proved in terms of the order, minimum, and maximum degree. Its sharpness is demonstrated by several results over different graph classes. In particular, $\mc(P_n^k)= \mc(C_n^k)= \lfloor n/(k+1)\rfloor$ is true for the $k$-th power of a path and a cycle if $n \ge k+1$. Further, $\mc(G) = (n-d)/3$ holds if $G$ is a $(\mbox{claw}, K_4)$-free cubic graph and contains $d$ diamonds. %claw-free cubic graph on $n \ge 6$ vertices and contains $d$ diamonds. 
It is further shown that the majority C-chromatic number is not monotone under edge deletion. In fact, both the lower and upper bounds are sharp in the inequality chain $\mc(G)-2 \leq \mc(G-e) \leq \mc(G) +1$. The minimum and maximum number of edges in an $n$-vertex graph $G$ with $\mc(G)=k$ are determined for every $n$ and $k$. It is also pointed out that the classical chromatic number $\chi(G)$ and $\mc(G)$ are incomparable, and the difference $\mc(G)-\chi(G)$ can take any positive or negative integer. On the other hand,  $\mc(G)+\chi(G) \leq n+1$ holds for every graph $G$ of order $n$. The decision problem of whether $\mc(G) \ge k$ holds is NP-complete for every fixed $k\ge 2$. In contrast, some sufficient conditions for $\mc(G) \ge 2$ are proved, and a linear-time algorithm is presented that determines $\mc(T)$ if $T$ is a tree. 
\end{abstract}
\medskip

\noindent
{\bf Keywords:} majority coloring; C-coloring; satisfactory partition; majority C-coloring; power of a path; power of a  cycle; cubic graph; chromatic number; algorithm on trees.
\medskip

\noindent
{\bf AMS Subj.\ Class.\ (2020):}  05C15, 05C85

%%%%%%%%%%%%%%%%%%%%%%%%%%%%%%%%%
\section{Introduction}

The classical vertex coloring of graphs requires that different colors be assigned to adjacent vertices, and a central question asks for the chromatic number, which is the minimum number of colors in such a coloring. Over the long and celebrated history of chromatic graph theory, several variants, both strengthenings and relaxations, of the coloring constraint were studied. However, the focus remained on the problems where the goal is to use as few colors as possible. In this paper, we explore a different direction when the constraint asks for (a certain number of) neighbors with the same color and we are interested in maximizing the number of colors.  
\medskip

We define the \emph{majority C-coloring} (shortly, \emph{$\cg$-coloring})  of a simple graph $G$ as a vertex coloring so that each vertex has a \textbf{c}ommon color with at least half of its neighbors. A coloring that assigns the same color to all vertices of a graph $G$ is always a majority C-coloring, implying that the minimum number of colors is always $1$. Here, the challenging problem is to determine the possible maximum number of colors that can be used in a majority C-coloring of a given graph $G$. We call this maximum number the \emph{majority C-chromatic number} (shortly, \emph{$\cg$-chromatic number}) of $G$ and denote it by $\mc(G)$. A formal definition is given in Section~\ref{sec:pre}. 

Considering, for example,  path and cycle graphs, it is easy to observe that the $\cg$-chromatic number can be arbitrarily high over these graph classes. In contrast, complete graphs and a large subclass of complete bipartite graphs provide examples with $\mc(G)=1$ for arbitrarily large order.
For examples, see Fig.~\ref{fig:ex_substar_cubic}.

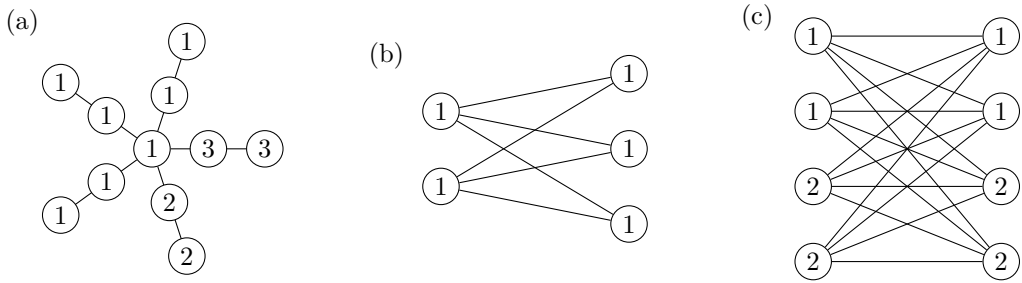
\begin{figure}[htb] \label{fig:ex_substar_cubic}
\centering
\begin{minipage}{0.3\textwidth}
\raggedright \makebox[0cm][l]{ (a)}\hspace*{0.6cm} \\[-0.6em]
\centering
 \begin{tikzpicture}[scale=1.0, every node/.style={circle, draw, fill=white, inner sep=2pt}]
        \def\n{5}
       \def\radius{1.5}
    
        % Root
        \node (r) at (0,0) {1};
        
        % Subdivided arms
        \foreach \i in {1,...,\n} {
            \coordinate (u\i) at (\i*360/\n:\radius/2); % subdivision vertex
           \coordinate (v\i) at (\i*360/\n:\radius);   % leaf vertex
            \node (ui\i) at (u\i) {};  % empty node for now
            \node (vi\i) at (v\i) {};  % empty node for now
            \draw (r) -- (ui\i) -- (vi\i);
        }

        % Coloring numbers (centered)
        \node at (ui1) {1};
        \node at (vi1) {1};
        \node at (ui2) {1};
        \node at (vi2) {1};
        \node at (ui3) {1};
        \node at (vi3) {1};
        \node at (ui4) {2};
        \node at (vi4) {2};
        \node at (ui5) {3};
        \node at (vi5) {3};
    \end{tikzpicture}

    \end{minipage}
\hspace{0.02\textwidth}
\begin{minipage}{0.3\textwidth}
\raggedright \makebox[0cm][l]{(b)}\hspace*{0.6cm}
\\[-0.6em] % mały pionowy minus, opcjonalnie
\centering
\begin{tikzpicture}[
scale=1,
vertex/.style={circle, draw, inner sep=2pt},
lab/.style={font=\scriptsize}
]

% LEFT side
\node[vertex] (l1) at (0,2) {1};
\node[vertex] (l2) at (0,1) {1};
%\node[vertex] (l3) at (0,0) {1};
%\node[vertex] (l4) at (0,1) {1};
%\node[vertex] (l5) at (0,0) {1};

% RIGHT side 
\node[vertex] (r1) at (2.5,0.5) {1};
\node[vertex] (r2) at (2.5,1.5) {1};
\node[vertex] (r3) at (2.5,2.5) {1};
% edges
\foreach \i in {1,2}
  \foreach \j in {1,2,3}
    \draw (l\i) -- (r\j);
\end{tikzpicture}
\end{minipage}
\hspace{0.02\textwidth}
\begin{minipage}[c]{0.3\textwidth}
\raggedright \makebox[0cm][l]{(c)}\hspace*{0.6cm}
\\[-0.6em] % mały pionowy minus, opcjonalnie
\centering
\begin{tikzpicture}[
scale=1,
vertex/.style={circle, draw, inner sep=2pt},
lab/.style={font=\scriptsize}
]
% LEFT side
\node[vertex] (l1) at (0,3) {1};
\node[vertex] (l2) at (0,2) {1};
\node[vertex] (l3) at (0,1) {2};
\node[vertex] (l4) at (0,0) {2};

% RIGHT side
\node[vertex] (r1) at (2.5,3) {1};
\node[vertex] (r2) at (2.5,2) {1};
\node[vertex] (r3) at (2.5,1) {2};
\node[vertex] (r4) at (2.5,0) {2};

% edges
\foreach \i in {1,2,3,4}
  \foreach \j in {1,2,3,4}
    \draw (l\i) -- (r\j);

\end{tikzpicture}
\end{minipage}
\caption{$\cg$-coloring of (a) subdivided star $S(S_5)$, (b) complete bipartite graph $K_{2,3}$, and (c) complete bipartite graph $K_{4,4}$, using the maximum number of colors.}
\label{fig:ex_substar_cubic}
\end{figure}

%\cs{Recall from~\cite{GeKob-1998} that the sum satisfactory $k$-partition of a graph $G=(V,E)$ is a (nontrivial) partition  $\cP=\{V_1,\dots, V_k\}$ of $V$, where each vertex $v \in V_i$ has at least half of its neighbors in the same class $V_i$, for $i \in [k]$. 
%It was proved in~\cite{BTV-06} that this problem is NP-complete for every $k \ge 2$.} Equivalently, with the present terminology, the decision problem of $\mc(G) \ge k$ is NP-complete for every fixed integer $k \ge 2$. \cs{Or we may omit this paragraph and discuss it in the motivation/satisfactory partition part. ?? If not, we should remove the definition of the sum satisfactory $k$-partition from there.}
\medskip

By definition, the color classes of a $\cg$-coloring may represent communities in biological or social networks. In general, the notion of majority C-coloring is closely connected to clustering problems. Further applications will be discussed in more detail in Section~\ref{sec:final}.

\paragraph{Standard definitions.} 
Throughout the paper, we consider only simple undirected graphs and use the standard graph-theoretic notation according to~\cite{West-book}.
For a positive integer $k$, the symbol $[k]$ stands for the set $\{1,2,\ldots,k\}$.
When a graph is explicitly denoted as $G=(V,E)$, we write $V$ and $E$ for its vertex set and edge set;
otherwise, we use the notation $V(G)$ and $E(G)$ to emphasize the dependence on the underlying graph.
For a vertex $v$ of a graph $G$, we denote by $N_G(v)$ the {\it open neighborhood} of $v$  which is the set of its neighbors. Thus $\deg_G(v)=|N_G(v)|$. A vertex of degree $1$ is a {\it leaf}. %The \emph{minimum degree} of a graph $G$ is $\min_{v\in V(G)}\deg_G(v)$ and it is denoted by $\delta(G)$ while the \emph{maximum degree} of a graph $G$ is $\max_{v\in V(G)}\deg_G(v)$ and it is denoted by $\Delta(G)$. 
We will refer to the minimum degree, maximum degree, clique number, and girth of a graph $G$ as $\delta(G)$, $\Delta(G)$, $\omega(G)$, and $g(G)$, respectively.

For $u,v\in V(G)$, the {\it distance} between $u$ and $v$ in $G$, denoted by $\mathrm{dist}_{G}(u,v)$, is the length of the shortest path between them in $G$. When the graph $G$ is known from the context we will write $N(v)$, $\deg(v)$, and $\dist(u,v)$ instead of $N_G(v)$, $\deg_G(v)$, and $\dist_G(u,v)$, respectively. 

For a positive integer $k$, the $k$-{\it th power of a graph} $G$ is the graph $G^{k}$
defined on the vertex set $V(G)$ such that two vertices $u$ and $v$ are adjacent in $G^{k}$
if and only if $\mathrm{dist}_{G}(u,v)\le k$. Recall that the \emph{diamond} is the graph  $K_4-e$ and  the \emph{claw} is the graph $K_{1,3}$.
Finally, a {\it spanning subgraph} of a graph $G$ is a subgraph that contains all vertices of $G$.

\paragraph{Our results and the structure of the paper.} 
In this paper, we start the study of majority C-coloring by considering it over specific graph classes and proving some basic properties. In Section~\ref{sec:pre}, we establish two sharp upper bounds on $\mc(G)$ and prove that $\mc(G)$ is not monotone under taking spanning subgraphs. Further, the $\cg$-chromatic number of the spanning subgraph $G-e$, obtained by deleting an edge $e$ from $G$, satisfies 
\begin{equation*}
        \mc(G)-2 \leq \mc(G-e) \leq \mc(G) +1,
        \label{ineq:del_edge}
    \end{equation*} 
    where both the lower and upper bounds are sharp.
    Section~\ref{sec:graph_classes} is devoted to two graph classes, both demonstrating that $\mc(G)$ can take an arbitrary value over the class. 
    In particular, for the powers of paths and cycles, the same formula $$\mc(P_n^k)=\mc(C_n^k)= \Bigl\lfloor \frac{n}{k+1} \Bigr\rfloor$$
    is proved for every pair $(n,k)$ with $n \ge k+2$. If $G$ is a cubic graph of order $n$ that contains $d$ diamonds, the upper bound $\mc(G) \leq \frac{n-d}{3}$ is valid and holds with equality if $G$ is also $(\mbox{claw}, K_4)$-free. In Section~\ref{sec:min-max}, the maximum and minimum number of edges in an $n$-vertex graph $G$ with $\mc(G)=k$ is identified for every $n \ge k \ge 2$. The classical chromatic number $\chi(G)$ and the $\cg$-chromatic number $\mc(G)$ are incomparable, also in the sense that their difference and sum can be arbitrary. On the other hand, if $n$ is the order of $G$, then
    $n+1$ is a tight upper bound on  $\chi(G)+\mc(G)$.

    As a consequence of earlier results, we conclude that deciding whether $\mc(G) \ge k$ is an NP-complete problem for every fixed $k \ge 2$. In contrast, we design a linear-time algorithm in Section~\ref{sec:sec-tree} that determines the exact value of $\mc(T)$ for a tree $T$. In Section~\ref{sec:sufficient} we present sufficient conditions on $G$ to satisfy $\mc(G) \ge 2$.  Finally, in Section~\ref{sec:final}, we discuss several possible applications of the majority C-coloring, pose a conjecture and several open problems related to $\mc$-edge-stability and  $\mc$-edge-criticality of graphs.

\subsection{Motivation} \label{subsec:motivation}
We shortly introduce the hypergraph and graph coloring models from the literature that inspired the introduction of the majority C-coloring.

\paragraph{C-coloring of hypergraphs.} A hypergraph $\cH=(X,\cE)$ is a set system  $\cE$ over the underlying vertex set $X$. More precisely,  $\cE \subseteq 2^X \setminus \{\emptyset\}$ is required. However, in the context of C-coloring, one-element subsets of $X$ are also excluded from the hyperedge set $\cE$. If each $e \in \cE$ contains exactly two vertices, then $\cH$ is $2$-uniform and corresponds to a (simple) graph. 

A vertex coloring $\phi \colon  X \rightarrow \mathbb{N}$ is a C-coloring of the hypergraph $\cH=(X, \cE)$, if every hyperedge $e \in \cE$ contains at least two vertices assigned the same color. The largest number of different colors that can be used in a C-coloring of $\cH$ is the upper chromatic number $\overline{\chi}(\cH)$ of the hypergraph. 

Although sporadic related studies were conducted earlier (see e.g.~\cite{Arocha, Sterboul}), systematic research on C-coloring of hypergraphs was not initiated until the seminal papers of Voloshin~\cite{Vol-1993, Vol-1995}. The extensive study of this topic revealed several unexpected phenomena when the C-coloring constraint is applied together with the classical constraint that prohibits the presence of monochromatic hyperedges~\cite{Bacso, Blazsik, Bujtas-2009, Jiang-2002, Kundgen}. For more details and results, we refer the reader to the book~\cite{Vol-2002}, the survey paper~\cite{BuTu-2010}, and the references therein.

A trivial but important fact is that the C-coloring of a graph (i.e., a $2$-uniform hypergraph) $G$, makes every component monochromatic, and therefore the upper chromatic number equals the number of components of $G$. 

\paragraph{$k$-improper C-coloring of graphs.} 
In light of some motivating problems, the following relaxation of the C-coloring constraint was introduced and studied for graphs in~\cite{BuSaTuPuVa-10}. If $k$ is a positive integer and $G$ is a graph, then  $\phi \colon  V(G) \rightarrow \mathbb{N}$ is a $k$-{\it improper C-coloring} if every vertex $v \in V(G)$ has at most $k$ neighbors which receive colors different from $\phi(v)$.  The largest number of colors that can be used in a $k$-improper C-coloring of $G$ is the $k$-improper upper chromatic number $\overline{\chi}_{k\mbox{\footnotesize{-imp}}}(G)$. 

The definitions directly imply that for $r$-regular graphs, $\lfloor \frac{r}{2} \rfloor$-improper C-colorings correspond to majority C-colorings.

\paragraph{Satisfactory partitions.} 
%The motivational example given in \cite{GeKob-2000} for the satisfactory graph partitioning problem describes a partition of a group of conference participants into two boats for sightseeing, such that everyone knows at least as many persons in his/her boat as in the other one, and neither boat is empty. 
Given a graph $G$ and a (nontrivial) partition $\cP=\{V_1, V_2\}$ of its vertex set $V$, we say that $\cP$ is a {\it satisfactory partition} if each vertex $v \in V_i$ has at least as many neighbors in $V_i$ as in $V_{3-i}$, for $i \in \{1,2\}$. It is straightforward to see that $G$ is (satisfactorily) partitionable if and only if $\mc(G) \ge 2$ that is $G$ admits a majority C-coloring with two colors.

The results in \cite{BTV-2003, GeKob-2000, Moshi, Shaf} show that all cubic graphs except $K_4$ and $K_{3,3}$ are
partitionable in linear time, and all $4$-regular graphs except $K_5$ are partitionable in linear time. Therefore, $\mc(G) \ge 2$ holds for every $3$- and $4$-regular graph, except for $K_4$, $K_{3,3}$, and $K_5$. Further, as proved in~\cite{GeKob-2004}, the same inequality $\mc(G) \ge 2$ holds if $G$ is a graph different from a star with a girth of at least $5$. For further results, we refer to the survey paper~\cite{BTV-06}.

A more general variant, called the  {\it sum satisfactory $k$-partition} was introduced~\cite{BTV-06, GeKob-1998} as a (nontrivial) partition  $\cP=\{V_1,\dots, V_k\}$ of $V$, where each vertex $v \in V_i$ has at least half of its neighbors in the same class $V_i$, for $i \in [k]$. However, the study of this problem was brief and focused solely on algorithmic questions. For this reason, although a $\cg$-coloring with $k$ colors corresponds to a sum satisfactory $k$-partition, we decided to use the name majority C-coloring to connect our topic to the C-coloring problems.

\paragraph{Majority coloring.}
A coloring of the vertices of a graph is called a \textit{majority coloring} if for every vertex $v$ at most half of its neighbours receive the same color as $v$.
It follows from the more general result of Lovász~\cite{Lovasz} that every undirected graph admits a majority coloring with two colors. The corresponding problem for directed graphs has been studied in~\cite{CaXiYa-2025, KeOuSeZyWo-17}.

Both majority coloring and majority C-coloring are based on imposing local constraints on monochromatic neighborhoods: for each vertex, the number of neighbors sharing its color is bounded by a prescribed threshold. However, the two notions differ substantially in their optimization goals. Nevertheless, there exist graph classes for which the majority chromatic number coincides with the majority C-chromatic number; an example is the graph $K_{4,4}$ shown in Fig.~\ref{fig:ex_substar_cubic}(c). 

We also note that an edge analogue of majority coloring has been considered in~\cite{BoKaPaPlRaWo-2023}.

%  We will use both names interchangeably in our paper.

%\begin{definition}[\textcolor{orange}{Upper?} $\cg$-chromatic number]\label{def:mc}
%\emph{For a graph $G$ define the} upper $\cg$-chromatic number \emph{of $G$, denoted by $\mc(G)$, as the largest integer $k$ for which $G$ admits a $\cg$-coloring:}
%\[\mc(G)\;:=\;\max\bigl\{\,k\in\mathbb{N}:\ \exists\ f:V\to [k] \ \text{\emph{which is a $\cg$-coloring of }}G\,\bigr\}.\] \end{definition}

%%%%%%%%%%%%%%%%%%%%%%%%%%%%%%%%%%%%%%%%%%%%%%%%%
\section{Preliminary results } 
\label{sec:pre}
%%%%%%%%%%%%%%%%%%%%%%%%%%%%%%%%%%%%%%%%%%%%%%%%
\begin{definition}[Majority C-coloring] %($\cg$-coloring)]
\label{def:cg}
Let $G = (V, E)$ be an undirected graph and $k$ a positive integer.  
A \textit{majority C-coloring} (\textit{$C_{\geqslant}$-coloring}, for short) of $G$ with $k$ colors is a surjective function $ \phi : V \rightarrow [k] $
such that every vertex $v \in V$ satisfies
\begin{equation} \label{eq:majority-1}
    \bigl|\{\, u \in N(v) : \phi(u) = \phi(v) \,\}\bigr| \ge \frac{\deg_G(v)}{2}.
\end{equation} 
The largest integer $k$ for which $G$ admits a $\cg$-coloring is called \emph{majority C-chromatic number} (\textit{$C_{\geqslant}$-chromatic number}, for short) and it is denoted by $\mc(G)$. A majority C-coloring of a graph $G$ with $\mc(G)$ colors is called a $\mc$-{\it coloring} of $G$. 
\end{definition}
For a majority C-coloring $\phi$ of $G=(V,E)$ that uses $k$ colors, we define the color classes $V_i=\phi^{-1}(i)$, for $i \in [k]$, and use this notation throughout the paper when $\phi$ is clear from the context. If a vertex $v$ belongs to the color class $V_i$, condition~\eqref{eq:majority-1} can be formulated equivalently as
\begin{equation} \label{eq:majority-2}
    |N(v) \cap V_i| \ge |N(v) \cap (V\setminus V_i)|.
\end{equation}

Definition~\ref{def:cg} directly implies the following basic properties.
\begin{observation} \label{obs:basic} Let $\phi$ be a majority C-coloring of the graph $G$.
\begin{enumerate}[label=\normalfont(\roman*)]
    \item If $v$ is a leaf in $G$ and $vu \in E(G)$, then $\phi(v)=\phi(u)$.  
    \item If $\phi$ uses $k$ colors and $k \ge 2$, then there exists a majority C-coloring with $(k-1)$-colors. In particular, for every $k'$ with $1 \leq k' \leq \mc(G)$, the graph has a majority C-coloring with $k'$ colors.
    \item If $\phi$ uses $\mc(G)$ colors, then every color class induces a connected subgraph in $G$.
    \item If  $G$ is a disconnected graph with components $G_1, \dots, G_k$, then $\mc(G)= \sum_{i=1}^k \mc(G_i)$.
\end{enumerate}   
\end{observation}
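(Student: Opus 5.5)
Each of the four items follows from the fact that condition~\eqref{eq:majority-2} is local: it only compares, for each vertex $v$, the neighbors of $v$ inside its own class with those outside. I will treat the items in order.

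For (i), a leaf $v$ has $\deg(v)=1$. Condition~\eqref{eq:majority-1} then asks for at least $1/2$ neighbor of the same color, so its unique neighbor $u$ must satisfy $\phi(u)=\phi(v)$.

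For (ii), take a coloring with $k\ge 2$ colors and merge two color classes, say $V_{k-1}$ and $V_k$, into a single class. Consider a vertex $v$ outside the merged class. Its own class is unchanged, and the set of its neighbors outside its class is unchanged, so~\eqref{eq:majority-2} still holds. Now consider $v\in V_{k-1}\cup V_k$. The number of its neighbors in its own class does not decrease, while the number outside does not increase, so~\eqref{eq:majority-2} is preserved. After relabeling, this gives a surjective coloring onto $[k-1]$. The ``in particular'' claim follows by starting from a $\mc$-coloring and repeating the merge step.

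For (iii), suppose $\phi$ uses $\mc(G)$ colors but some class $V_i$ induces a disconnected subgraph. Split $V_i$ into $A$, the vertex set of one component of $G[V_i]$, and $B=V_i\setminus A\neq\emptyset$, and give $B$ a new color. A vertex $v\in A$ has no neighbors in $B$, because $A$ is a union of components of $G[V_i]$. Hence $|N(v)\cap A|=|N(v)\cap V_i|$ and $|N(v)\setminus A|=|N(v)\setminus V_i|$, so~\eqref{eq:majority-2} still holds for $v$. The same argument applies symmetrically to vertices of $B$. Vertices in other classes are unaffected, since their own class and its complement are unchanged. This produces a majority C-coloring with $\mc(G)+1$ colors, a contradiction.

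For (iv), I prove the two inequalities separately. For ``$\ge$'', take a $\mc$-coloring of each component $G_i$ with pairwise disjoint palettes. Condition~\eqref{eq:majority-2} only involves neighbors, which all lie in the same component, so the union is a majority C-coloring of $G$ with $\sum_i \mc(G_i)$ colors. For ``$\le$'', take a $\mc$-coloring of $G$. By (iii) every color class is connected, so each class lies within a single component. The restriction of the coloring to $G_i$ is then a majority C-coloring of $G_i$, and it uses exactly those classes contained in $G_i$. Hence at most $\mc(G_i)$ classes lie in $G_i$, and summing over $i$ gives the bound.

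The only point requiring care is (iii), where one must check that vertices of $A$ have no neighbors in $B$. This is exactly where the choice of $A$ as a union of components of $G[V_i]$ is used.
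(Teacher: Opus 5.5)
Your proof is correct and follows the paper's argument. You merge two classes for (ii), recolor part of a disconnected class with a fresh color for (iii), and derive (iv) from (iii) plus the locality of the majority condition. In (iii) you recolor the complement $B$ of one component rather than the component itself, which makes no difference, and overall you supply the verifications the paper leaves as immediate.
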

Statement (i) immediately follows from~\eqref{eq:majority-1}. Concerning part (ii), we note that by merging two color classes in a $\cg$-coloring, the majority condition~\eqref{eq:majority-1} remains satisfied. Concerning (iii), suppose that a color class induces a subgraph with components $H_1, \dots , H_{\ell}$, where $\ell \ge 2$. By recoloring the vertices of $H_1$ with a new color, we obtain a $\cg$-coloring of $G$ with $\mc(G)+1$ colors that is impossible by definition. Statement (iii) and Definition~\ref{def:cg} directly imply (iv).

\vspace{5mm} %5mm vertical space

The above observations, together with simple structural arguments, allow us to determine exact values of $\mc(G)$ for some basic graph classes. %We briefly indicate the reasoning behind these values below.

\begin{observation} \label{obs:simple_graph_classes}
For the following graph classes, the majority C-chromatic number satisfies:
\begin{enumerate}[label=\normalfont(\roman*)]
\item For a complete graph $K_n$, $n\geq 1$, $\mc(K_n)=1$.
    
    \item For a cycle $C_n$ and a path $P_n$, $n\geq 3$, $\mc(C_n)=\mc(P_n)=\lfloor \frac{n}{2}\rfloor$.
    %\item For a path $P_n$, $n\geq 2$, $\mc(P_n)=\lfloor n/2\rfloor$.
\item For a star $S_n$, $n\geq 0$, on $n+1$ vertices, $\mc(S_n)=1$; and for a subdivided star $S(S_n)$, we have $\mc(S(S_n))=\lfloor \frac{n}{2}\rfloor + 1$.
   % \item For a subdivided star $S(S_n)$, $\mc(S(S_n))=\lfloor n/2\rfloor + 1$.
    \item For a wheel $W_n$, $n\geq 3$, on $n+1$ vertices, $\mc(W_n)=1$.
    \item For a complete bipartite graph $K_{m,n}$, $m,n\geq 1$, $\mc(K_{m,n})=2$ if both $m,n$ are even and $\mc(K_{m,n})=1$ otherwise.
\end{enumerate}
\end{observation}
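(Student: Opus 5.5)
Each of the five items will be handled separately. The common upper-bound tool is Observation~\ref{obs:basic}(iii): in a $\mc$-coloring every color class induces a connected subgraph. I will also use the following simple counting fact. If a vertex $v$ of degree $d$ lies in class $V_i$, then $|V_i|\ge 1+\lceil d/2\rceil$.

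\textbf{Items (i) and (iv): the cliques and wheels.} For $K_n$, suppose a coloring uses at least two colors, and pick a smallest class $V_i$. Any $v\in V_i$ has $|V_i|-1$ neighbours in its own class and $n-|V_i|\ge |V_i|$ neighbours outside it, which violates \eqref{eq:majority-2}. For $W_n$, let $c$ be the color of the hub. Any rim vertex colored differently from the hub has degree $3$, so it needs both rim neighbours in its own class. Hence that non-hub class is a union of whole rim arcs that are closed under taking rim neighbours, i.e., it is the entire rim. Then the hub has all $n\ge3$ of its neighbours in another class, which is a contradiction.

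\textbf{Item (ii): cycles and paths.} For the lower bound, I partition the cycle or path into consecutive blocks of size $2$. When $n$ is odd, one block has size $3$. Each vertex has degree at most $2$ and has a neighbour inside its own block, so \eqref{eq:majority-1} holds. This gives $\lfloor n/2\rfloor$ colors. For the upper bound, every vertex has degree at least $1$, so by \eqref{eq:majority-1} it has a same-colored neighbour. Thus every class has size at least $2$, which gives at most $\lfloor n/2\rfloor$ classes.

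\textbf{Item (iii): stars and subdivided stars.} For the star, Observation~\ref{obs:basic}(i) forces every leaf to share the center's color. For $S(S_n)$, let the center be $r$, with arms $r-u_i-w_i$. By Observation~\ref{obs:basic}(i), $w_i$ has the color of $u_i$. For the lower bound, I give at most $\lfloor n/2\rfloor$ arms their own private colors and color the remaining $\lceil n/2\rceil$ arms, together with $r$, with a common color. Then $r$ has at least half of its neighbours in its own class, and every $u_i$ has $w_i$ in its own class. For the upper bound, the class of $r$ must contain at least $\lceil n/2\rceil$ of the $u_i$. By connectivity (Observation~\ref{obs:basic}(iii)) every other class lies inside a single arm. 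So there are at most $n-\lceil n/2\rceil=\lfloor n/2\rfloor$ further classes, giving $\lfloor n/2\rfloor+1$ in total. The case $n=0$ is trivial.

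\textbf{Item (v): complete bipartite graphs.} This is the main step. Let the sides be $A$ and $B$ with $|A|=m$ and $|B|=n$. A vertex of $A$ in class $V_i$ needs $|V_i\cap B|\ge n/2$, and similarly $|V_i\cap A|\ge m/2$ is needed for vertices of $B$. In a coloring with at least two classes, connectivity (Observation~\ref{obs:basic}(iii)) means that in a maximum coloring each class meets both sides, as long as $m,n\ge 2$. The cases $m=1$ or $n=1$ are stars and are covered by item (iii). So every class contains at least half of $A$ and at least half of $B$. Two disjoint such classes force $|V_1\cap A|=|V_2\cap A|=m/2$ and likewise for $B$. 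Hence $m$ and $n$ are even, and there are exactly two classes. Conversely, if $m$ and $n$ are both even, splitting each side in half and pairing the halves gives a valid $2$-coloring, since each vertex then has exactly half of its neighbours in its own class.

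The subtle point is justifying that every class meets both sides. If a class lies entirely inside $A$ with at least two vertices, it is disconnected. If it is a single vertex $a\in A$, then $a$ has no same-colored neighbour although $\deg(a)\ge1$. Either way this contradicts Observation~\ref{obs:basic}(iii) or \eqref{eq:majority-1}.
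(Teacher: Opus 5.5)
Your proof is correct and follows the paper's argument item by item. You use the same majority-condition counting for $K_n$, $C_n$, $P_n$, the subdivided star and $K_{m,n}$, and the same propagation around the rim for $W_n$. The only deviation is in (v), where the appeal to Observation~\ref{obs:basic}(iii) and the separate treatment of $m=1$ or $n=1$ are unnecessary: your singleton argument already shows directly that in any $\cg$-coloring every vertex of $A$ has a same-colored neighbour in $B$ (and vice versa), so every class meets both sides.
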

\begin{proof}
\noindent (i) Note that every vertex of $K_n$ is of degree $n-1$, so if we color a vertex $v$ with any color, the relevant color class in any $\cg$-coloring of $K_n$ needs to have at least $1+\frac{1}{2}(n-1)=\frac{1}{2}(n+1)$ vertices. Thus, it is impossible that $\mc(K_n)>1$.

\noindent (ii) It is clear that every color class in any $\cg$-coloring of $P_n$ or $C_n$ contains at least two vertices. Hence, the $\cg$-chromatic number is at most $\lfloor \frac{n}{2}\rfloor$, and a $\cg$-coloring with this number of colors is easy to obtain. Note that the formula of $\mc(C_n)$ corresponds to $\overline{\chi}_{1\mbox{\footnotesize{-imp}}}(C_n)$. The latter was already established in~\cite{BuSaTuPuVa-10}.

%\hf{Justification for $C_n$ to integrate with the rest:} Since cycles are $2$-regular, majority $C$-colorings of $C_n$ are equivalent to $1$-improper $C$-colorings, which implies the result (cf. \cite{BuSaTuPuVa-10}).

%\hf{Justification for $P_n$ to integrate with the rest:}The case of paths can be treated analogously as for cycles. Indeed, the path $P_n$ is obtained from the cycle $C_n$ by deleting a single edge. Since this operation does not affect the local degree conditions relevant to majority $C$-colorings, we receive the result for $n\geq 3$. For shorter paths it can be checked directly.

\noindent (iii) The value for a star follows directly from Observation~\ref{obs:basic}(i).  For a subdivided star, consider the central vertex $r$ of $\deg(r) = n$. At least $\lceil \frac{n}{2} \rceil$ of the adjacent vertices must receive the color of $r$. Hence, at most $\lfloor \frac{n}{2} \rfloor$ neighbors of $r$ can be colored differently, which, together with Observation~\ref{obs:basic}(i), yields $\mc(S(S_n))=\lfloor \frac{n}{2} \rfloor+1.$
An example of such a coloring is presented in Fig.~\ref{fig:ex_substar_cubic}(a).

\noindent (iv) Suppose that a majority $\cg$-coloring $\phi$ of $W_n$ uses more than one color and let $r$ be the central vertex. Consider a vertex $v$ from the cycle such that $\phi(v) \neq \phi(r)$. 
The degree constraints then force all vertices from the cycle to receive the same color $\phi(v)$, distinct from $\phi(r)$, which violates the majority condition at $r$. Hence $\mc(W_n)=1$.

\noindent (v) Observe that each color class in any $\cg$-coloring of $K_{m,n}$ needs to contain at least $\lceil\frac{m}{2}\rceil$ vertices from one part of the graph and at least $\lceil\frac{n}{2}\rceil$ vertices from the other part. This simple fact forces two colors with $m,n$ even, and one color if any of these numbers is odd. See Fig.\ \ref{fig:ex_substar_cubic}(b) and (c) for examples.   \end{proof}

%The argument for wheels, stated in item (iii), is slightly more complex. Assume $\cg$-coloring is possible with more than 1 color. Assign to $r$ a color $\phi(r)$. To satisfy majority condition at least half of the cycle vertices must also receive $\phi(r)$. Every vertex of the outer cycle has degree three. Hence, every such vertex must have at most one neighbor of different color. Since $r$ is already colored with $\phi(r)$, every cycle vertex must share the same color, distinct from $\phi(r)$. This, however, violates majority condition for $r$. Hence, such coloring is impossible and we conclude that $\mc(W_n)= 1$.

   % Observe that $S'_2 \simeq P_5$ and its  $\cg$ chromatic number can be calculated both from the formula (\ref{eq:path}) and from Theorem \ref{thm:subdstar}.

  At the end of this part, we recall that the decision problem of whether a graph $G$ admits a sum satisfactory $k$-partition was proved to be NP-complete~\cite{BTV-06} for every fixed $k\ge 2$. As noted in Section~\ref{subsec:motivation}, this question is equivalent to deciding whether $\mc(G) \ge k$ holds. Therefore, we may conclude the following key property.

 \begin{proposition} \label{prop:NP-c}
     For every fixed integer $k \ge 2$, it is NP-complete to decide whether $\mc(G) \ge k$ holds for a general graph $G$.
 \end{proposition}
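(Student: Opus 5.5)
The plan is to reduce the statement to the known NP-completeness of sum satisfactory $k$-partition from~\cite{BTV-06}. There are two parts: membership in NP, and NP-hardness through an equivalence of decision problems.

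For membership in NP, the certificate is a surjective map $\phi\colon V\to[k]$. We check surjectivity and condition~\eqref{eq:majority-1} at every vertex, which takes $O(|V|+|E|)$ time. One subtlety is that $\mc(G)\ge k$ asks for a $\cg$-coloring with \emph{at least} $k$ colors, whereas the certificate uses exactly $k$ colors. Observation~\ref{obs:basic}(ii) closes this gap: merging two color classes preserves~\eqref{eq:majority-1}, so $\mc(G)\ge k$ holds if and only if $G$ admits a $\cg$-coloring with exactly $k$ colors. Hence the certificate is valid, and $k$ being fixed plays no role here.

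For hardness, I identify $\cg$-colorings with exactly $k$ colors with sum satisfactory $k$-partitions. Given such a coloring, the color classes $V_1,\dots,V_k$ are nonempty by surjectivity. By~\eqref{eq:majority-2}, each $v\in V_i$ has at least half of its neighbors in $V_i$, so the classes form a nontrivial partition of the required type. Conversely, such a partition defines a surjective coloring satisfying~\eqref{eq:majority-1}. Combined with the previous paragraph, $G$ has a sum satisfactory $k$-partition if and only if $\mc(G)\ge k$. The identity map on instances is therefore a polynomial reduction, and NP-hardness for every fixed $k\ge 2$ transfers directly from~\cite{BTV-06}.

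The main obstacle is definitional rather than combinatorial. I have to check that ``nontrivial'' in~\cite{BTV-06,GeKob-1998} means exactly that all $k$ parts are nonempty, and that the neighbor-count convention there matches~\eqref{eq:majority-2}, namely ``at least half'' with ties allowed. If the cited hardness used a strict inequality or allowed empty parts, I would need a small gadget adapting the reduction. I would first try to avoid this by checking the exact formulation in~\cite{BTV-06}, which the paper already states in the matching form in Section~\ref{subsec:motivation}.
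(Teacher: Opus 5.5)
Your proposal is correct and matches the paper's argument: the paper likewise cites the NP-completeness of sum satisfactory $k$-partition from~\cite{BTV-06} and observes that such a partition exists if and only if $\mc(G)\ge k$. Your explicit NP-membership check and your use of Observation~\ref{obs:basic}(ii) to pass from ``at least $k$'' to ``exactly $k$'' colors spell out steps the paper leaves implicit.
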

 
\subsection{General upper bounds}
\begin{proposition} 
    Let $G$ be a graph on $n$ vertices with minimum degree $\delta(G)$ and maximum degree $\Delta(G)$. Then $$\mc(G) \leq 1+\frac{n - \Bigl\lceil \frac{\Delta(G)}{2}\Bigr\rceil -1}{\Bigl\lceil \frac{\delta(G)}{2}\Bigr\rceil +1}.$$
    \label{prop:upperboundDelta}
\end{proposition}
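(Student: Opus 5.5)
Fix a $\cg$-coloring $\phi$ of $G$ with $k=\mc(G)$ colors and color classes $V_1,\dots,V_k$. The plan is to count vertices class by class: one class is forced to be large by a vertex of maximum degree, and every other class is at least moderately large by the minimum degree. Summing these lower bounds against $n$ gives the inequality.

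First, take a vertex $w$ with $\deg(w)=\Delta(G)$, and assume without loss of generality that $w\in V_1$. By~\eqref{eq:majority-1}, $w$ has at least $\lceil \Delta(G)/2\rceil$ neighbors in $V_1$, so
\[
|V_1| \ge \Bigl\lceil \tfrac{\Delta(G)}{2}\Bigr\rceil + 1 .
\]
Second, for each $i\in\{2,\dots,k\}$, pick any $v\in V_i$. Since $\deg(v)\ge \delta(G)$, the same condition gives at least $\lceil \deg(v)/2\rceil \ge \lceil \delta(G)/2\rceil$ neighbors of $v$ inside $V_i$, hence
\[
|V_i|\ge \Bigl\lceil \tfrac{\delta(G)}{2}\Bigr\rceil+1 .
\]

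Summing over all classes, we get
\[
n=\sum_{i=1}^k |V_i| \ge \Bigl\lceil \tfrac{\Delta(G)}{2}\Bigr\rceil+1+(k-1)\Bigl(\Bigl\lceil \tfrac{\delta(G)}{2}\Bigr\rceil+1\Bigr).
\]
Rearranging for $k-1$ yields exactly the stated bound. The case $k=1$ needs no argument, because the numerator $n-\lceil\Delta/2\rceil-1$ is nonnegative: $w$ together with its $\Delta$ neighbors already gives at least $\Delta+1$ vertices.

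There is no real obstacle here. The only points needing care are the integrality step, that a vertex of degree $d$ needs at least $\lceil d/2\rceil$ same-colored neighbors, and the monotonicity $\lceil \deg(v)/2\rceil\ge\lceil\delta/2\rceil$.
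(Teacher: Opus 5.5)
Your proof is correct and follows the paper's argument: a maximum-degree vertex placed in $V_1$ forces $|V_1|\ge\lceil\Delta(G)/2\rceil+1$, every other class has size at least $\lceil\delta(G)/2\rceil+1$, and summing against $n$ gives the bound. Your remark on the case $k=1$, which the paper does not spell out, is harmless.
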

\begin{proof}
Let $\mc(G)=k$ %and $\phi$ be a $\mc$-coloring of $G$. %$\phi:V\to [k]$ be a $\cg$-coloring of $G$ such that $\mc(G)=k$, 
  and let  $V_1,V_2,\ldots ,V_k$ be the color classes in a $\mc$-coloring of $G$. Let $v$ be a vertex of maximum degree in $G$. We may assume, without loss of generality, that  $v \in V_1$. Thus, by the majority condition~\eqref{eq:majority-1},
$$ |V_1| \ge |N(v) \cap V_1| +1 \ge \Bigl \lceil \frac{\Delta(G)}{2}\Bigr \rceil +1.
$$
Similarly, each remaining color class $V_i$, for $2 \le i \le k$, contains a vertex of degree at least $\delta(G)$, and its size can be estimated as  $|V_i| \ge \bigl\lceil \frac{\delta(G)}{2} \bigr\rceil +1$. We therefore infer 
$$(k-1)\Bigl(\Bigl \lceil \frac{\delta(G)}{2}\Bigr \rceil +1 \Bigr) \leq     n - \Bigl\lceil \frac{\Delta(G)}{2}\Bigr\rceil -1,$$ 
 and the desired inequality follows.  
\end{proof}

\begin{corollary}
     Let $G$ be a graph of order $n$. Then $$\mc(G)\le \frac{n}{\Bigl\lceil \frac{\delta(G)}{2}\Bigr\rceil +1}.$$ 
    \label{col:upperbounddelta}
\end{corollary}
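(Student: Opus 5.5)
The corollary should follow directly from Proposition~\ref{prop:upperboundDelta} by weakening the right-hand side, using only $\Delta(G)\ge\delta(G)$. Alternatively, I would give the short direct argument: every color class in a $\mc$-coloring has at least $\lceil \delta(G)/2\rceil+1$ vertices.

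For the direct route, let $V_1,\dots,V_k$ be the color classes of a $\mc$-coloring, so $k=\mc(G)$. Pick any vertex $v\in V_i$. By the majority condition~\eqref{eq:majority-1}, $v$ has at least $\lceil \deg(v)/2\rceil \ge \lceil \delta(G)/2\rceil$ neighbors in $V_i$. Hence $|V_i|\ge \lceil \delta(G)/2\rceil+1$ for every $i\in[k]$. The classes partition $V$, so summing gives $k\bigl(\lceil \delta(G)/2\rceil+1\bigr)\le n$, which is the claimed bound.

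If instead one derives the corollary from Proposition~\ref{prop:upperboundDelta}, write $a=\lceil \delta(G)/2\rceil+1$ and $b=\lceil \Delta(G)/2\rceil+1\ge a$. The proposition gives $\mc(G)\le 1+(n-b)/a = (n-b+a)/a \le n/a$, since $b\ge a$.

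There is no real obstacle here. The only point to note is that the ceiling is justified: the number of same-colored neighbors is an integer that is at least $\deg(v)/2$.
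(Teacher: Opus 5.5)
Your proposal is correct and matches the paper. The corollary is stated as an immediate consequence of Proposition~\ref{prop:upperboundDelta}, and your weakening via $\lceil \Delta(G)/2\rceil \ge \lceil \delta(G)/2\rceil$ is exactly that step; your direct class-size count is the same counting argument the paper uses to prove that proposition.
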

Several results of this paper demonstrate that the upper bounds in Proposition~\ref{prop:upperboundDelta} and Corollary~\ref{col:upperbounddelta} are sharp. Proposition~\ref{prop:cycle-k} will show that these bounds are tight for $C_n^k$ whenever $n$ is divisible by $k+1$ and $n \ge 2k+2$. Proposition~\ref{prop:cubic} will provide another infinite class of sharp examples by implying that every $(\text{claw}, \text{diamond}, K_4)$-free cubic graph $G$ of order $n$ satisfies $\mc(G) = \frac{1}{3}n$. 

%Note that the bound from Corollary~\ref{col:upperbounddelta} is sharp, for example for all even cycles and also for (claw, diamond)-free cubic graphs. While the first class of even cycles is easy to verify, in the latter case first we note that for each vertex in a (claw, diamond)-free cubic graph, the neighborhood $N(v)$ induces either $K_3$ or $K_2\cup K_1$. Consequently, every connected such graph is either $K_4$ or a ring of even number of triangles. Note now that the vertex set of a (claw, diamond)-free cubic graph $G$, excluding $K_4$, can be partitioned into triangles. 
%If every triangle gets its private color, we get a $\cg$-coloring of $G$ that shows $\mc(G) \ge n/3$. Therefore,  $\mc(G)=n/3$. As $\delta(G)=3$, the inequality in the corollary holds with equality for $G$.
%One can easily check that (claw, diamond)-free cubic graphs, excluding $K_4$, are also good example for the  tightness of the bound from Proposition~\ref{prop:upperboundDelta}
\subsection{Edge deletion and spanning subgraphs}

In this section, we start with the investigation of how the $\cg$-chromatic number of a graph behaves under the deletion of edges.
%In Section \ref{sec:graph_classes}, we already considered how removing any edge in a complete graph affects the $\cg$-chromatic number. It turned out that removing one edge from a complete graph does not actually change the value of this parameter. Now we consider the effect of the edge removal operation on any graph. We will examine how much the $\cg$-chromatic number can change after such an operation. 
It turns out that if a subgraph is obtained by the deletion of a single edge,  the $\cg$-chromatic number might still increase or decrease. However, for this case, we can prove sharp lower and upper bounds.

\begin{proposition}\label{prop:G-e}
    For every graph $G$ and every edge $e \in E(G)$, it holds that
    \begin{equation}
        \mc(G)-2 \leq \mc(G-e) \leq \mc(G) +1.\label{ineq:del_edge}
    \end{equation} 
Moreover, both bounds are sharp.
\end{proposition}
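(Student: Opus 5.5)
The plan is to prove both inequalities by local recoloring arguments around the deleted edge $e=uv$. The only tool needed is the merging principle from Observation~\ref{obs:basic}(ii): merging two color classes of a $\cg$-coloring preserves~\eqref{eq:majority-1}, since it can only increase the number of same-colored neighbors of every vertex. Sharpness will then be shown by explicit small examples.

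\emph{Upper bound.} Let $\phi$ be a $\mc$-coloring of $G-e$ with $k=\mc(G-e)$ colors. Only $u$ and $v$ change their degree when $e$ is put back, and each degree increases by one.
\begin{itemize}
\item If $\phi(u)=\phi(v)$, then for $u$ both the number of same-colored neighbors and the degree grow by one. From $s\ge d/2$ we get $s+1\ge (d+1)/2$, so $\phi$ is a $\cg$-coloring of $G$ and $\mc(G)\ge k$.
\item If $\phi(u)\neq\phi(v)$, first merge the classes of $u$ and $v$ in $G-e$. This gives a $\cg$-coloring of $G-e$ with $k-1$ colors in which $e$ is monochromatic. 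The previous case then shows it is also a $\cg$-coloring of $G$.
\end{itemize}
Hence $\mc(G)\ge \mc(G-e)-1$.

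\emph{Lower bound.} Let $\phi$ be a $\mc$-coloring of $G$ with $k=\mc(G)$ colors, and view it in $G-e$. Again only $u$ and $v$ are affected, each losing one neighbor.
\begin{itemize}
\item If $\phi(u)\ne\phi(v)$, each of $u,v$ keeps all its same-colored neighbors while its degree drops, so nothing breaks.
\item If $\phi(u)=\phi(v)$, then $u$ fails in $G-e$ only if in $G$ it had exactly $s=\deg_G(u)/2$ same-colored neighbors. Since $s\ge 1$ (because $v$ is such a neighbor), $u$ then has $\deg_G(u)/2\ge1$ neighbors of other colors.
\end{itemize}
In the failing case, merge the class of $u$ with the class of one such neighbor $w$ of $u$. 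In $G-e$ vertex $u$ now has at least $(s-1)+1=s$ same-colored neighbors, which is at least $(\deg_G(u)-1)/2$. All other vertices stay satisfied because merging is monotone. If $v$ still fails afterwards, repeat the argument for $v$ with one more merge. At most two merges are used, giving $\mc(G-e)\ge k-2$.

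\emph{Sharpness.} For the upper bound the trivial example $G=K_2$ works: $\mc(K_2)=1$, while $K_2-e=2K_1$ has $\mc=2$ by Observation~\ref{obs:basic}(iv). For the lower bound I would build a gadget in which the edge $uv$ is what allows a class containing $u$ and $v$ to be satisfied. Deleting $uv$ should leave both $u$ and $v$ tight, so that $u$ is forced into the class of one neighboring cluster and $v$ into a different cluster. Removing $u$ and $v$ must also destroy the ability of those two clusters to remain separate from each other, or from a third cluster, which loses two colors. Concretely, I would try $u$ and $v$ of degree $4$ with $N(u)\setminus\{v\}$ and $N(v)\setminus\{u\}$ spread over dense blocks (cliques $K_m$, for which $\mc=1$ by Observation~\ref{obs:simple_graph_classes}(i)). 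I would then verify $\mc(G)$ and $\mc(G-e)$ through counting arguments in the style of Proposition~\ref{prop:upperboundDelta}.

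This construction, together with certifying the exact value of $\mc(G-e)$ (an upper bound over all colorings), is the main obstacle. The two inequalities themselves are routine.
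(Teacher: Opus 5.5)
Your proofs of the two inequalities are correct and follow the paper's argument. For the upper bound you merge the classes of $u$ and $v$. For the lower bound you note that a failure in $G-e$ forces $\phi(u)=\phi(v)$, $\deg_G(u)$ even and exactly $\deg_G(u)/2$ same-colored neighbors, and you repair each endpoint with one merge. Repeating this for $v$ is legitimate because a coarsening of a $\cg$-coloring of $G$ is again one. Your $K_2$ example correctly shows the upper bound is attained. It is simpler than the paper's family (two copies of $C_{2k+1}$ joined by $u_1v_1$ and $u_kv_k$, with $\mc(F)=2k-1$ and $\mc(F-u_1v_1)=2k$), although that family also gives connected examples with arbitrarily large $\mc$.

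The genuine gap is the sharpness of the lower bound. It is part of the statement, and you leave it as an unexecuted plan. The plan as sketched is not enough. Take its most direct reading: the three other neighbors of $u$ lie in disjoint cliques $A_1,A_2,A_3$ of order at least $3$, and those of $v$ in further cliques $B_1,B_2,B_3$. Then $\mc(G)=5$ (the class $\{u,v\}\cup A_1\cup B_1$ plus the other four cliques), but $\mc(G-e)=4$, since in $G-e$ each of $u,v$ simply joins two of its own cliques. Only one color is lost.

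What is needed is that the class containing $e$ survives the deletion through other vertices. Then $u$ and $v$ must each swallow a separate block, and no rearrangement compensates. The paper does this with the family $\mathcal{H}$. Take a cycle $v_1v_2\dots v_nv_1$ and attach at each $v_i$ two pendant paths $v_ib_ia_i$ and $v_ic_id_i$ (a $P_5$ whose middle vertex is $v_i$). The leaves force the pairs $\{a_i,b_i\}$ and $\{c_i,d_i\}$ to be monochromatic, and these pairs satisfy themselves.

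In a $\mc$-coloring the classes are connected, so a class meets the cycle in an arc or in the whole cycle. The two ends of an arc with $L\ge 2$ vertices have only one same-colored cycle neighbor but need two same-colored neighbors, so each absorbs a pendant pair. A one-vertex arc absorbs both of its pairs. Hence an arc accounts for at most $2L-1$ colors (its own class plus the unabsorbed pairs). The whole cycle as a single class gives $2n+1$, so $\mc(H)=2n+1$.

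In $H-v_1v_2$ the cycle becomes a path whose ends $v_1,v_2$ have degree $3$ and still need two same-colored neighbors. Every class meets it in a subpath, and the same count gives $\mc(H-v_1v_2)\le 2n-1$. This is attained by keeping the whole path in one class, with $v_1$ and $v_2$ absorbing one pair each.
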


\begin{proof}
    Let $e=v_1v_2$ be an edge in $G$ and $\phi'$ a $\mc$-coloring of $G'=G-e$. Then we add the edge $e$ to $G'$, and consider the assignment $\phi'$ in $G$. The majority C-coloring condition remains satisfied for every vertex $u$ different from $v_1$ and $v_2$. However, since $\deg_G(v_i)=\deg_{G'}(v_i)+1$, for $i \in \{1,2\}$, it is possible that $\lceil\frac{1}{2}\deg_G(v_i)\rceil = \lceil\frac{1}{2} \deg_{G'}(v_i)\rceil +1$. If $\phi'(v_1) \neq \phi'(v_2)$, it might result in a situation when $v_1$ or $v_2$ has one less neighbor with a common color than required in $G$. In this case, we define a coloring $\phi$ by merging the color classes of $v_1$ and $v_2$. This coloring of $G$ uses $\mc(G-e)-1$ colors, and it is a $\cg$-coloring of $G$. Consequently, $\mc(G-e) -1 \leq \mc(G)$.

        To prove the lower bound, we take a $\cg$-coloring $\varphi$ of $G$ which uses $\mc(G)$ colors. Let $e=v_1v_2$ be an edge of $G$. If $\varphi$ is not a $\cg$-coloring for $G'=G-e$,  then $\varphi(v_1)=\varphi(v_2)$ and $v_i$ has only 
        \begin{equation} \label{eq:G-e}
           \Bigl\lceil \frac{\deg_G(v_i)}{2} \Bigr\rceil -1 < \Bigl\lceil \frac{\deg_{G'}(v_i)}{2} \Bigr\rceil= \Bigl\lceil \frac{\deg_{G}(v_i)-1}{2} \Bigr\rceil 
        \end{equation}
       neighbors with the same color in $G'$, for $i=1$, or $i=2$, or for both. If inequality~\eqref{eq:G-e} holds, then $\deg_G(v_i)$ is even and $v_i$ has a neighbor $u_i$ with $\varphi(u_i) \neq \varphi(v_i)$. By merging the color classes of $u_i$ and $v_i$, we obtain a coloring that satisfies the majority condition for $v_i$. If~\eqref{eq:G-e} holds for both $v_1$ and $v_2$, then we might have to merge two further color classes. This way, we obtain the vertex coloring $\varphi'$. As $\varphi'$ is a $\cg$-coloring of $G-e$, and it uses at least $\mc(G)-2$ colors, we conclude $\mc(G)-2 \leq \mc(G-e)$.

To show the tightness of the lower bound, we construct a family $\mathcal{H}$ of graphs $H$ as follows.
Starting from a cycle $C_n$ with vertex sequence
$v_1,v_2,\ldots,v_n,v_1$, we attach a path $P_5$ to each vertex $v_i$ by identifying its middle vertex with $v_i$, for every $i \in [n]$.
Consequently, the graph $H$ has $5n$ vertices and $5n$ edges. It can be readily verified that $\mc(H)=2n+1$ and that $\mc(H - v_1v_2)=2(n-1)+1=2n-1$.
\\For the upper bound, we construct a family $\mathcal{F}$ of graphs $F$ obtained as follows.
Take two odd cycles $C_{2k+1}$, with vertex sequences
$u_1,\dots,u_{2k+1},u_1$ and $v_1,\dots,v_{2k+1},v_1$,
respectively, and add the edges $u_1v_1$ and $u_kv_k$.
It is straightforward to verify that $\mc(F)=2k-1$,
whereas $\mc(F - u_1v_1)=2k$.
\end{proof}

The sharpness of the two bounds in~\eqref{ineq:del_edge} implies that there is no general monotonicity relation between the parameter $\mc(G)$ of a graph $G$ and that of its spanning subgraphs. The following examples show further evidence for this fact.
Let $m\ge 2$. Recall that the Dutch windmill graph $D_{3,m}$, also
known as the friendship graph,  is formed by taking $m$ copies of the cycle $C_3$ and identifying one vertex from each copy into a single common vertex.
%is obtained by taking $m$ copies of the cycle $C_3$ sharing a common vertex. 
It is easy to observe that $\mc(D_{3,m})=\left\lfloor \frac{m}{2} \right\rfloor +1.$
Observe that the star $K_{1,2m}$ is a spanning subgraph of
$D_{3,m}$, and hence there exists a spanning subgraph $H$ of a graph
$G$ such that
$\mc(H)=\mc(K_{1,2m})=1$
 while $\mc(G)=\mc(D_{3,m})\ge 2$. On the other hand, for $n\ge 4$, the cycle $C_n$ is a spanning
subgraph of the complete graph $K_n$. In this case we have
$\mc(G)=\mc(K_n)=1$
and $
\mc(H)=\mc(C_n)= \left\lfloor \frac{n}{2} \right\rfloor \ge 2$, showing that the value of $\cg$-chromatic number may also increase when passing to a spanning subgraph. 

Altogether, these examples further demonstrate that the majority C-chromatic number is neither monotone increasing nor monotone decreasing with respect
to taking spanning subgraphs.

\section{Some graph classes}\label{sec:graph_classes}
    
\subsection{Powers of paths and cycles}
Recall that $P_n^k$, the $k$-th power of a path of order $n$, is obtained from $P_n$ by making adjacent every pair of vertices that are within distance $k$ in $P_n$. For $n \leq k+1$, the diameter of $P_n$ is at most $k$, and hence $P_n^k \cong K_n$, and by Observation~\ref{obs:simple_graph_classes}(i), it follows that $\mc(P_n^k)=1$. Therefore, we restrict our attention to the case $n \ge k+2$.

\begin{theorem} \label{thm:power-paths}
For every two positive integers $n$ and $k$ with $n \ge k+2$, it holds that
    \begin{align} \label{eq:thm-path-k}
    \mc(P_n^k)= \Bigl\lfloor\frac{n}{k+1}\Bigr\rfloor.
\end{align}
\end{theorem}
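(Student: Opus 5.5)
The proof has two halves: a block construction for the lower bound, and a lower bound on the size of every color class for the upper bound. Throughout, label the vertices of $P_n^k$ as $1,2,\dots,n$ in path order, so that $i$ and $j$ are adjacent exactly when $0<|i-j|\le k$.

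\emph{Lower bound.} Set $q=\lfloor n/(k+1)\rfloor$; since $n\ge k+2$ we have $q\ge 1$. Cut $1,\dots,n$ into $q$ consecutive intervals (blocks). The first $q-1$ blocks have exactly $k+1$ vertices, and the last block takes the remaining $s$ vertices, where $k+1\le s\le 2k+1$. Give each block its own color. To verify condition~\eqref{eq:majority-1}, take a vertex at position $i$ (counted from $0$) in a block of size $s\ge k+1$, and compare its in-block neighbors with its out-of-block neighbors.
\begin{itemize}
\item If $s-1-i\ge k$, it has at least $k$ neighbors inside the block to the right, and at most $k-i\le k$ neighbors outside the block.
\item If $i\ge k$, the symmetric argument applies.
\item Otherwise both $i<k$ and $s-1-i<k$. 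Then the whole rest of the block, namely $s-1\ge k$ vertices, consists of neighbors. The out-of-block neighbors number at most $(k-i)+(k-s+1+i)=2k-s+1\le k$.
\end{itemize}
In every case at least half of the neighbors share the vertex's color, so this is a $\cg$-coloring with $q$ colors.

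\emph{Upper bound.} It suffices to show that every color class $V_i$ of a $\mc$-coloring satisfies $|V_i|\ge k+1$, because then summing over the classes gives $\mc(P_n^k)(k+1)\le n$. Note that Corollary~\ref{col:upperbounddelta} only yields classes of size about $k/2$, since the endpoints have degree $k$; so the direct degree argument is too weak and the linear structure must be used. I would argue by contradiction and suppose $|V_i|=m\le k$. Let $v$ be the leftmost vertex of $V_i$ and $w$ its rightmost vertex.
\begin{itemize}
\item Every neighbor of $v$ to its left has a different color, so the majority condition at $v$ reads $2a\ge \ell_v+r_v$. Here $\ell_v=\min(v-1,k)$ and $r_v=\min(n-v,k)$ count the left and right neighbors of $v$, and $a=|V_i\cap(v,v+k]|\le m-1\le k-1$.
\item The mirror inequality $2b\ge \ell_w+r_w$ holds at $w$, with $b=|V_i\cap[w-k,w)|\le m-1$.
\end{itemize}
From $2a\ge \ell_v+r_v$ and $a<k$ we get $r_v<k$ or $\ell_v<k$. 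The plan is to combine these two inequalities with $n\ge k+2$ and with the fact that, by Observation~\ref{obs:basic}(iii), $V_i$ induces a connected subgraph. The contradiction should come from the union $N[v]\cup N[w]$: it covers an interval of at least $k+2$ vertices, of which at most $m\le k$ lie in $V_i$, leaving too many differently colored neighbors to be split between $v$ and $w$.

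I expect this counting step to be the main obstacle. When $w-v\le k$, the two neighborhoods overlap heavily and a naive sum of degrees loses too much. In that case I would first try to exploit that all $m-1$ same-colored neighbors of $v$ lie in the window $(v,w]$, while at least $\ell_v$ plus $(r_v-(w-v))$ differently colored vertices lie outside it; then I would case-split on whether $v\le k$ (near the left end) or $w\ge n-k+1$ (near the right end). When $w-v>k$, connectivity forces an intermediate vertex of $V_i$, and I would instead apply the same inequality at a suitably chosen interior vertex of $V_i$ that has $k$ neighbors on both sides. That vertex needs at least $k$ same-colored neighbors, giving $m\ge k+1$ directly.
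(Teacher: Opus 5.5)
Your lower bound is correct and is the paper's block coloring. Reducing the upper bound to the claim that every color class has at least $k+1$ vertices is also sound. The gap is that this claim, which is the whole content of the upper bound, is never proved, and your sketched route cannot prove it. It only uses the majority condition at vertices of $V_i$ (at $v$, at $w$, or at an interior vertex) together with the connectivity of $V_i$. These constraints alone do not force $|V_i|\ge k+1$:
\begin{itemize}
\item In $P_5^3\cong K_5-e$, the set $\{v_1,v_2,v_3\}$ is connected, has $k=3$ vertices and $w-v\le k$, and each of its vertices has at least half of its neighbors inside it.
\item In $P_{21}^{10}$, the connected set $\{v_5,\dots,v_9\}\cup\{v_{13},\dots,v_{17}\}$ has $k=10$ vertices and $w-v=12>k$, and it satisfies the majority condition at each of its own vertices. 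It contains no vertex with $k$ neighbors on both sides (only $v_{11}$ has that property), so your ``suitably chosen interior vertex'' need not exist.
\item Similar sets exist when $n\ge 2k+2$, for example $\{v_7,\dots,v_{13}\}\cup\{v_{18},\dots,v_{24}\}$ in $P_{30}^{14}$.
\end{itemize}
In each case the majority condition fails only at a vertex outside the set, for instance $v_4$ in the first two examples, most of whose neighbors lie in the set. So no counting over $N[v]\cup N[w]$ that uses only the constraints of $V_i$ can produce the contradiction you expect.

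The missing idea is to play the majority conditions of two different classes against each other near the ends of the path. The paper shows that if $1\le i\le k$ and $i\le n-k-1$, then $v_i$ and $v_{i+1}$ receive the same color. Both closed neighborhoods lie in $\{v_1,\dots,v_{i+k+1}\}$. If the two colors differed, the two classes would together need at least $\lceil (i+k-1)/2\rceil+1+\lceil (i+k)/2\rceil+1=i+k+2$ vertices of this $(i+k+1)$-element set, which is impossible.

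For $n\ge 2k+2$, this makes the first $k+1$ and the last $k+1$ vertices monochromatic blocks. A class meeting either end therefore has at least $k+1$ vertices. A class avoiding both ends contains a vertex of degree $2k$, so it also has at least $k+1$ vertices.

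For $k+2\le n\le 2k+1$, the same claim makes $\{v_1,\dots,v_{n-k}\}$ and $\{v_{k+1},\dots,v_n\}$ monochromatic. Each class then contains a vertex of degree $n-1$, so it has at least $(n+1)/2$ vertices, and only one color is possible.
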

\begin{proof}
    Let $G=P_n^k$ be the $k$-th power of the path $P_n \colon v_1, \dots, v_n$ and define $I[a,b]=\{v_a, \dots, v_b\}$, for every two integers $a$, $b$ with  $1\le a \le b \le n$. We first note that the coloring that assigns color $s$ to the vertices in $I[(s-1)(k+1)+1, s(k+1)]$, for every $1 \le s \le \lfloor \frac{n}{k+1} \rfloor -1$, and assigns the color $q= \lfloor \frac{n}{k+1} \rfloor $ to the remaining vertices in $I[(q-1)(k+1)+1, n]$ satisfies the majority condition~\eqref{eq:majority-1} (cf. Fig.~\ref{fig:P8-cube-arcs}). Consequently, $\mc(P_n^k) \ge \lfloor \frac{n}{k+1} \rfloor$ holds for every $n$ and $k$. In the continuation, we prove that the reverse inequality also holds if $n \ge k+2$.
    \medskip
    
    Let $\mc(G)=\ell$ and let $\phi$ be a $\mc$-coloring of $G$. The color classes associated with $\phi$ will be denoted by $V_1, \dots, V_\ell$. 
    We prove the following key property.
    \begin{claim} \label{claim:0}
        If $1 \le i \le k$ and $i \leq n-k-1$, then $\phi(v_i)=\phi(v_{i+1})$.
    \end{claim} 
    \proof Suppose, to the contrary, that $v_i \in V_p$, $v_{i+1} \in V_q$, and $p \neq q$. The conditions $i \leq k$ and $i \leq n-(k+1)$ imply that $N[v_i]=I[1,i+k]$ and $N[v_{i+1}]=I[1, i+k+1]$. Applying the majority condition~\eqref{eq:majority-1} to $v_i$ and $v_{i+1}$, respectively, we obtain
    \begin{align*}
        |V_p\cap I[1,i+k+1]| &\ge \Bigl\lceil \frac{\deg_G(v_i)}{2} \Bigr\rceil +1= \Bigl\lceil \frac{i+k-1}{2} \Bigr\rceil +1.\\
        |V_q\cap I[1,i+k+1]| &\ge \Bigl\lceil \frac{\deg_G(v_{i+1})}{2} \Bigr\rceil +1= \Bigl\lceil \frac{i+k}{2} \Bigr\rceil +1.   
        \end{align*}
        Since $V_p \cap V_q = \emptyset$, these imply 
        $$|I[1, i+k+1]|=i+k+1 \ge \Bigl\lceil \frac{i+k-1}{2} \Bigr\rceil + \Bigl\lceil \frac{i+k}{2} \Bigr\rceil +2= i+k+2.
        $$
       This contradiction proves that $\phi(v_i) \neq \phi(v_{i+1})$ is impossible under the given conditions. \smallqed
   \medskip
   
   In the continuation, we distinguish two cases.
  \paragraph{Case 1.} $2k+2 \le n$.\\
    We prove that every color class $V_j$ contains at least $k+1$ vertices. The condition $2k+2 \le n$ ensures $k < n-k-1$, and then Claim~\ref{claim:0} implies that $I[1,k+1]$ is monochromatic. Consequently, if $V_j$ intersects $I[1,k+1]$, then $|V_j| \ge k+1$. By symmetry, the same is true if $V_j$ intersects $I[n-k,n]$. If $V_j \subseteq I[k+2, n-k-1]$, then $V_j$ contains a vertex of degree $2k$ and, by~\eqref{eq:majority-1}, we may derive $|V_j| \ge k+1$. The conclusion is that every color class contains at least $k+1$ vertices and $\mc(G) \leq \lfloor \frac{n}{k+1} \rfloor$.
     \paragraph{Case 2.} $k+2 \le n \le 2k+1$.\\
     Let $B_1=I[1,n-k]$ and $B_2=I[k+1,n]$. If $n \leq 2k-1$, then there are vertices outside $B_1 \cup B_2$ and we define also $A=I[n-k+1, k]$ (see Fig.~\ref{fig:pow_path}).  Under the present conditions $n-k-1 \le k$ and then Claim~\ref{claim:0} shows that $B_1$ is monochromatic. By symmetry, $B_2$ is monochromatic as well. Consequently, if a color class $V_j$ intersects $B_1$, then $v_{n-k} \in V_j$. Since $\deg(v_{n-k})=n-1$, the majority condition~\eqref{eq:majority-1} gives
     \begin{equation} \label{eq:pow-path}
         |V_j| \ge \Bigl\lceil\frac{n-1}{2} \Bigr\rceil +1\ge \frac{n+1}{2}.
     \end{equation}
     If $V_j$ meets $B_2$, then $v_{k+1} \in V_j$ and $\deg(v_{k+1})=n-1$ implies the same lower bound~\eqref{eq:pow-path}. If $V_j$ contains a vertex from the remaining part $A$, this vertex is of degree $n-1$, and we get $|V_j| \ge \frac{1}{2}(n+1)$ again. Consequently, every color class contains at least $\frac{1}{2}(n+1)$ vertices and thus, $\phi$ uses only one color. This verifies $\mc(G)= \lfloor \frac{n}{k+1} \rfloor$ for the second case, and the proof is complete. 
\end{proof}

\begin{figure}[htb]
\centering
\begin{tikzpicture}[
scale=1,
vertex/.style={circle, draw, fill=white, inner sep=2pt},
lab/.style={font=\scriptsize},
group/.style={draw, rectangle, rounded corners, inner sep=6pt}
]

% --- B1 vertices (4 vertices) ---
\node[vertex,label={[lab,yshift=-2mm]below:$1$}] (v1) at (0,0) {};
\node at (1,0) [fill=white,inner sep=1pt] {$\dots$};
%\node[vertex] (v2) at (1,0) {};
%\node[vertex,label={[lab,xshift= -1.5mm, yshift=-2mm]below:$n-2k+1$}] (v3) at (2,0) {};
%\node at (2.5,0) [fill=white,inner sep=1pt] {$\dots$};
\node[vertex,label={[lab,xshift= -1mm, yshift=-2mm]below:$n-k$}] (v4) at (2,0) {};

% --- A vertices ---
\node[vertex,label={[lab,xshift= 2mm,yshift=-2mm]below:$n-k+1$}] (v5) at (3,0) {};
\node at (4,0) [fill=white,inner sep=1pt] {$\dots$};
\node[vertex,label={[lab, xshift= 0.3mm, yshift=-2mm]below:$k$}] (v6) at (5,0) {};

% --- B2 vertices ---
\node[vertex,label={[lab, xshift= 0.5mm, yshift=-2mm]below:$k+1$}] (v7) at (6,0) {};
\node at (7,0) [fill=white,inner sep=1pt] {$\dots$};
\node[vertex,label={[lab,yshift=-2.5mm]below:$n$}] (v8) at (8,0) {};

% edges (background)
\begin{scope}[on background layer]
\draw[thick] (v1) -- (v4) -- (v5) -- (v6) -- (v7) -- (v8);
\end{scope}

% --- Top groups ---
\node[group, label=above:$B_1$, fit=(v1)(v4)] {};
\node[group, label=above:$A$,   fit=(v5)(v6)] {};
\node[group, label=above:$B_2$,  fit=(v7)(v8)] {};

% --- Bottom subgroups of B1 ---
%\node[group, label={[yshift=1mm]above:$B_{1,1}$}, fit=(v1)(v2)] {};
%\node[group, label={[yshift=1mm]above:$B_{1,2}$}, fit=(v3)(v4)] {};

\end{tikzpicture}
\caption{A schematic drawing of the partition used in the proof of Theorem~\ref{thm:power-paths} for Case 2 with $k+2\leq n\leq 2k-1$.} \label{fig:pow_path}
\end{figure}
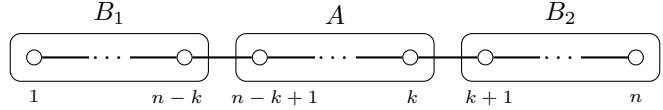

Since $P_n^{n-2} \cong K_n-e$ for any $n\geq 3$, and since $K_2-e$ is the edgeless graph on two vertices, we immediately have the following. 

\begin{corollary}\label{cor:Kn-e}
    If $e$ is an edge in the complete graph $K_n$, for $n \ge 2$, then 
\[
\mc(K_n-e)=
\begin{cases}
1, & \text{for } n\ge 3,\\[4pt]
2, & \text{for } n=2.
\end{cases}
\]
\end{corollary}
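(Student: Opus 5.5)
The plan is to reduce everything to Theorem~\ref{thm:power-paths} via the isomorphism $P_n^{n-2}\cong K_n-e$, and to handle the degenerate value $n=2$ by hand.

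\emph{Case $n\ge 3$.} Let $P_n\colon v_1,\dots,v_n$. The only pair of vertices at distance more than $n-2$ in $P_n$ is $\{v_1,v_n\}$, at distance exactly $n-1$. Hence $P_n^{n-2}$ is the complete graph on $\{v_1,\dots,v_n\}$ with the single edge $v_1v_n$ missing. Since all edges of $K_n$ are equivalent under automorphisms, $K_n-e\cong P_n^{n-2}$ for every edge $e$. Now set $k=n-2\ge 1$. Then $n=k+2$, so the hypothesis $n\ge k+2$ of Theorem~\ref{thm:power-paths} holds, and the theorem gives
\[
\mc(K_n-e)=\mc(P_n^{n-2})=\Bigl\lfloor \frac{n}{n-1}\Bigr\rfloor = 1,
\]
because $1< n/(n-1)\le 3/2$ for $n\ge 3$.

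\emph{Case $n=2$.} Here $K_2-e$ is the edgeless graph on two vertices. Each vertex has degree $0$, so condition~\eqref{eq:majority-1} holds trivially for every coloring. Coloring the two vertices differently is therefore a $\cg$-coloring, which gives $\mc\ge 2$. Since there are only two vertices, $\mc=2$. Alternatively, Observation~\ref{obs:basic}(iv) gives the sum of two copies of $\mc(K_1)=1$.

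There is no real obstacle. The only point needing care is checking that $n=k+2$ lies inside the range where Theorem~\ref{thm:power-paths} applies, which requires $k\ge1$ and hence $n\ge3$. This is exactly why $n=2$ is treated separately.
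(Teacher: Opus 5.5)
Your proposal is correct and follows the paper's own argument. The paper obtains the corollary directly from the isomorphism $P_n^{n-2}\cong K_n-e$ for $n\ge 3$ together with Theorem~\ref{thm:power-paths}, where $n=k+2$ gives $\lfloor n/(n-1)\rfloor=1$, and it handles $n=2$ by noting that $K_2-e$ is the edgeless graph on two vertices; your extra checks (that $v_1v_n$ is the only missing edge, that $k=n-2\ge 1$ places you within the theorem's range, and the direct verification for $n=2$) make explicit what the paper leaves implicit.
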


\begin{figure}[htb]
\centering
\begin{tikzpicture}[
    scale=1.0,
    every node/.style={circle, draw=black, fill=white, inner sep=1.5pt}
]
% Wierzchołki P_8
\foreach \i/\x/\col in {
  0/0/1, 1/1/1, 2/2/1, 3/3/1,
  4/4/2, 5/5/2, 6/6/2, 7/7/2
} {
  \node (v\i) at (\x,0) {\col};
}

% Krawędzie P_8 (odległość 1) – proste
\foreach \i in {0,...,6} {
  \pgfmathtruncatemacro{\j}{\i+1}
  \draw[thick] (v\i) -- (v\j);
}

% Krawędzie odległości 2 – niskie łuki
\foreach \i in {0,...,5} {
  \pgfmathtruncatemacro{\j}{\i+2}
  \draw[thick] (v\i) to[bend left=25] (v\j);
}

% Krawędzie odległości 3 – wyższe łuki
\foreach \i in {0,...,4} {
  \pgfmathtruncatemacro{\j}{\i+3}
  \draw[thick] (v\i) to[bend left=45] (v\j);
}

\end{tikzpicture}
\caption{$\mc$-coloring of $P_8^3$.}
\label{fig:P8-cube-arcs}
\end{figure}

For powers of cycles, the situation is much simpler, and we can easily establish the following statement.
\begin{proposition} \label{prop:cycle-k}
    For every two positive integers $n$ and $k$ with $n\geq 3$ and $n\geq 2k+1$, it holds that 
    \begin{align*} 
    \mc(C_n^k)=\Bigl\lfloor\frac{n}{k+1}\Bigr\rfloor.
\end{align*}
\end{proposition}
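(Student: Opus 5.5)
The plan is to prove the two inequalities separately. For the upper bound, observe that when $n \ge 2k+1$ the graph $C_n^k$ is $2k$-regular: the $2k$ vertices at cyclic distance at most $k$ from a vertex are pairwise distinct. Hence $\delta(C_n^k)=2k$, and Corollary~\ref{col:upperbounddelta} gives
\[
\mc(C_n^k) \le \frac{n}{k+1}.
\]
Since $\mc$ is an integer, $\mc(C_n^k)\le \lfloor n/(k+1)\rfloor$ follows immediately. Equivalently, every color class contains some vertex of degree $2k$, which needs at least $k$ neighbors of its own color, so every class has at least $k+1$ vertices.

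For the lower bound, label the cycle $v_1,\dots,v_n$ and set $q=\lfloor n/(k+1)\rfloor$. If $q=1$ the one-color coloring suffices, so assume $q\ge 2$. I would use the same interval coloring as in Theorem~\ref{thm:power-paths}, read cyclically. Color $s$ is given to $I[(s-1)(k+1)+1,\, s(k+1)]$ for $1\le s\le q-1$, and color $q$ to the remaining interval $I[(q-1)(k+1)+1,\, n]$. Each class is then a set of $m\ge k+1$ cyclically consecutive vertices.

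The one step that needs checking is that a vertex $v$ inside such a block has at least $k$ neighbors in its own block. Suppose $v$ has $a$ block vertices before it and $b$ after it along the cycle, with $a+b=m-1\ge k$. Its same-colored neighbors number at least $\min(a,k)+\min(b,k)$. If both $a,b<k$, this equals $a+b\ge k$. Otherwise at least one term equals $k$. This relies on the block being an arc of length at most $n$, with no wraparound double counting. That holds because the neighbors counted on the two sides are distinct whenever $n\ge 2k+1$, and the block $I[\cdot,n]$ does not wrap.

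I expect this counting to be the only mildly delicate point. A potential subtlety arises when the last block is long, but long blocks only help, so the check goes through. For $q\ge2$ each block has size less than $n$, so the coloring genuinely uses $q$ colors, and combining the two bounds gives equality.
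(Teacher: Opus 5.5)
Your proposal is correct and follows the paper's proof: the upper bound comes from $2k$-regularity together with Corollary~\ref{col:upperbounddelta}, and the lower bound reuses the interval coloring from Theorem~\ref{thm:power-paths}, read on the cycle. Your count $\min(a,k)+\min(b,k)\ge k$ supplies the verification the paper leaves implicit. That check is genuinely needed, because on $C_n^k$ every vertex has degree $2k$, unlike the lower-degree vertices near the ends of $P_n^k$.
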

\begin{proof} 
If $2k+1 \le n$, the graph $C_n^k$ is $(2k)$-regular and therefore, Corollary~\ref{col:upperbounddelta} directly implies that $\mc(C_n^k)\leq \lfloor\frac{n}{k+1}\rfloor$, while a $\cg$-coloring with $\lfloor\frac{n}{k+1}\rfloor$ colors can be obtained as described for $P_n^k$ in the proof of Theorem~\ref{thm:power-paths}.
\end{proof}

\begin{comment}
    \begin{figure}[htb!]
\centering
\begin{tikzpicture}[scale=1.0, every node/.style={circle, draw=black, fill=white, inner sep=1.5pt}]

% Wierzchołki głównej ścieżki (10)
\foreach \i/\x/\col in {
  0/0/1, 1/1/1, 2/2/2, 3/3/2, 4/4/3,
  5/5/3, 6/6/4, 7/7/4
} {
  \node (v\i) at (\x,0) {\col};
}

% Krawędzie ścieżki głównej
\foreach \i in {0,...,6} {
  \pgfmathtruncatemacro{\j}{\i + 1}
  \draw[thick] (v\i) -- (v\j);
}
\end{tikzpicture}
\caption{$\cg$-coloring of $P_{8}$ with $\mc(P_{8})$ colors. }\label{fig:P10}
\end{figure}
\end{comment}

Note that the powers of paths and the powers of cycles are examples of graphs with arbitrary high value of $\cg$-chromatic number.

%Let $m\ge 2$. Recall that the Dutch windmill graph, denoted by $D_3^{(m)}$, also called a friendship graph, is the graph obtained by taking $m$ copies of the cycle $C_3$ with a vertex in common. Observe that $\mc(D_3^{(m)})=\lfloor m/2 \rfloor +1$. 

%Let $H$ be a spanning subgraph of $G$. We claim that $\mc(G)$ and $\mc(H)$ are incomparable, which is indeed the case since a star $K_{1,2m}$ is a spanning subgraph of a Dutch wind mill graph $D_3^{(m)}$, $m\ge 2$, and $\mc(H)=\mc(K_{1,2m})=1<2\le \mc(D_3^{(m)})=\mc(G)$. And on the other side, a cycle $C_n$, $n\ge 3$, is a spanning subgraph of a complete graph $K_n$ and in this case for $n\ge 4$ we have $\mc(G)=\mc(K_n)=1<2\le \mc(C_n)=\mc (H)$.

\subsection{Cubic graphs} First, consider the Petersen graph $P$. Based on the results obtained for 1-improper C-coloring for this graph in \cite{BuSaTuPuVa-10}, we conclude that $\mc(P)=2$. By the results obtained in~\cite{BTV-06} that every cubic graph except $K_4$ and $K_{3,3}$ is partitionable, we know that every cubic graph except $K_4$ and $K_{3,3}$ admits a majority C-coloring with at least two colors. Our goal in this section is to prove a sharp upper bound on the majority C-chromatic numbers of  cubic graphs. 
%We also show that this upper bound cannot be improved in general: cubic graphs form a class of graphs whose majority c-chromatic number is unbounded. More precisely, we construct cubic graphs whose $\cg$-chromatic number is arbitrarily large. 
\begin{proposition} \label{prop:cubic}
    Let $G$ be an $n$-vertex cubic graph containing $d$ diamonds. Then 
    \begin{enumerate}
    [label=\normalfont(\roman*)]
        \item $\displaystyle \mc(G)\leq \frac{n-d}{3};$
        \item $\displaystyle\mc(G)=\frac{n-d}{3}$ if $G$ is $(\text{claw}, K_4)$-free. %and $n \ge 6$. 
    \end{enumerate}
\end{proposition}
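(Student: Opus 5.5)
The plan for (i) is a counting argument over the color classes of a $\mc$-coloring, and the plan for (ii) is an explicit coloring built from a partition of $V(G)$ into diamonds and triangles.

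\emph{Plan for (i).} Fix a $\mc$-coloring of $G$ with color classes $V_1,\dots,V_k$. Since $G$ is cubic, the majority condition~\eqref{eq:majority-1} says that every vertex has at least two neighbors of its own color. So each $G[V_i]$ has minimum degree at least $2$, which gives $|V_i|\ge 3$.

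Next I would analyze diamonds. Let $D$ be a diamond with vertices $a,b,c,d$, where $a,b$ are the two vertices of degree $3$ in $D$. Then $N(a)=\{b,c,d\}$ and $N(b)=\{a,c,d\}$, while $c$ and $d$ each have exactly one neighbor outside $D$. (They are not adjacent to each other, since $G$ contains the diamond and, if it were $K_4$, there would be no diamond counted; the $K_4$ components can be treated separately, as they contribute $0$ diamonds and $1\le 4/3$ color.)
\begin{itemize}
\item If $a$ and $b$ had different colors, each would need both $c$ and $d$ in its own class, which is impossible. Hence $a$ and $b$ share a color.
\item If $c$ had a color different from that of $a$, then two of its three neighbors, namely $a$ and $b$, would have the other color, violating the condition. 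The same holds for $d$.
\end{itemize}
Thus every diamond is monochromatic.

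Distinct diamonds are vertex-disjoint: $a$ and $b$ have all their neighbors inside $D$, so neither can lie in another diamond, and $c$ and $d$ cannot be degree-$3$ vertices of another diamond either. Consequently, a class containing $t\ge 1$ diamonds has size at least $4t\ge 3+t$, and a class containing no diamond has size at least $3$. Summing over the classes gives $n\ge 3k+d$, that is, $k\le (n-d)/3$.

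\emph{Plan for (ii).} By (i) and Observation~\ref{obs:basic}(iv), it suffices to exhibit a $\cg$-coloring with $(n-d)/3$ colors. I will show that the vertex set of a (claw, $K_4$)-free cubic graph partitions into its diamonds and into triangles not contained in any diamond.
\begin{itemize}
\item Claw-freeness gives an edge inside each $N(v)$, so every vertex lies in a triangle.
\item Suppose a vertex $v$ lies in two distinct triangles. Since $\deg(v)=3$, the two triangles must share an edge $vy$, so they span a diamond (not a $K_4$, by assumption). Hence any vertex outside the diamonds lies in exactly one triangle.
\item Such a triangle cannot meet a diamond. Its other vertices are not degree-$3$ diamond vertices, because the triangles containing those are inside the diamond. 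If it contained one of the degree-$2$ diamond vertices $c$, then $c$ would lie in two triangles, so by the previous point that triangle would lie in a diamond after all.
\end{itemize}
So the triangles outside diamonds are pairwise disjoint, disjoint from the diamonds, and together with the diamonds they cover $V(G)$.

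Give each diamond and each of these triangles its own color. Every triangle vertex has two neighbors in its part, and every diamond vertex has at least two neighbors in its diamond. This is a $\cg$-coloring with $d+(n-4d)/3=(n-d)/3$ colors.

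The main point needing care is the structural step: disjointness of diamonds in (i), and the triangle/diamond partition in (ii). I expect this to be routine degree bookkeeping rather than a real obstacle.
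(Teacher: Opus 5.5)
Your proposal is correct and follows essentially the same route as the paper. In (i), every class has at least $3$ vertices and every diamond is monochromatic; the paper deduces the latter from triangles being monochromatic, while you check it directly. This gives $n \ge 3k+d$. In (ii), you color the partition of $V(G)$ into diamonds and triangles, and you justify that partition more carefully than the paper does.

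Your disjointness sketch skips one case: a degree-$2$ vertex $c$ of $D$ being a degree-$2$ vertex of another diamond $D'$. This is immediate, since the two degree-$3$ vertices of $D'$ would form an adjacent pair inside $N(c)=\{a,b,c'\}$. The only such pair is $\{a,b\}$, which you have already excluded from lying in another diamond.
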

\begin{proof}
   (i) Let $G$ be a cubic graph, and let $\phi$ be a $\cg$-coloring  of $G$. By Corollary~\ref{col:upperbounddelta} we obtain $\mc(G)\leq \frac{n}{3}$. Hence, if $d=0$, we are done. Assume now that $d\geq 1$. Observe that the three vertices of a triangle in $G$ always receive the same color in $\phi$. This implies that each vertex of a diamond also receives the same color in $\phi$. Let $p$ be the number of colors assigned to the vertices belonging to diamonds, and let $k=\mc(G)-p$ be the number of remaining colors. Hence, $\mc(G)=p+k\leq d+k$. Since each color class contains at least 3 vertices, $n\geq 4d+3k$ . Combining these two inequalities, we obtain $\mc(G)\leq d+ \frac{1}{3}(n-4d)= \frac{1}{3}(n-d)$. \\
   (ii) First note that diamond subgraphs in a cubic graph must be pairwise vertex-disjoint. Furthermore, if $G$ is a $({\rm claw},K_4)$-free cubic graph then $V(G)$ can be partitioned into vertex sets $V_1, \dots, V_k$ such that each $V_i$ induces a diamond or a triangle. If $G$ contains $d$ diamonds then we have $d$ partition classes of cardinality $4$ and $\frac{1}{3}(n-4d)$ triangles (classes of cardinality $3$). Therefore, $k=d+ \frac{1}{3}(n-4d)= \frac{1}{3}(n-d)$. By assigning color $i$ to the vertices in $V_i$, for every $i \in [k]$, we obtain a $\cg$-coloring with $k$ colors. This proves $\mc(G) \ge \frac{1}{3}(n-d)$.
\end{proof}

As examples of  graphs with arbitrarily large majority C-chromatic number we can consider the family $\mathcal{G}$ of graphs $G$ defined as follows: 
%\begin{itemize}
    %\item [$G^c$] Let $p$ be an even number. Let us take $p$ cycles $C_3$ and add edges between vertices of these cycles in such a way that the resulting graph, say $G^c$, is cubic (cf. Fig.~\ref{fig:four-triangles}).
    %\item [$G^d$] 
    let $d\ge 2$ and  take $d$ copies of a diamond. We construct a graph $G$ by adding $d$ edges between vertices of degree 2 of two different copies of a diamond in such a way that the resulting graph is cubic and connected (cf. Fig.~\ref{fig:three-diamonds}).
%\end{itemize}
 Every graph $G\in \mathcal{G}$ is a $({\rm claw},K_4)$-free graph with $n=4d$ vertices. Hence, by Proposition~\ref{prop:cubic}(ii), we have $\mc(G)=\frac{1}{3}(n-d)=d$.
 
 %If we assign the same color to each vertex in the same copy of a diamond, while each copy of a diamond receives different color, the resulting coloring is a $\cg$-coloring. Hence, $\mc(G^d)\geq r$. Coversly, observe that ach vertex of a diamond has to receive the same color. It impies, $\mc(G^d)= r=\frac{1}{3}(n-r)$. 
 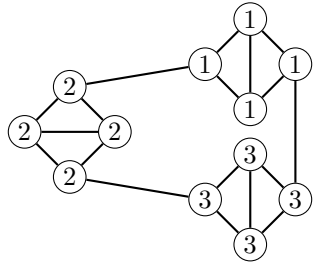
\begin{figure}[htb]
\centering
\begin{tikzpicture}[
    scale=0.6,
    every node/.style={circle, draw=black, fill=white, inner sep=1.5pt}
]
% =====================
% Diamond 1 (lewy)
% =====================
\node (a1) at (-2,  -0.5) {2};
\node (a2) at (-3,  0.5) {2};
\node (a3) at (-2,  1.5) {2};
\node (a4) at (-1, 0.5) {2};
\draw[thick]
(a1)--(a2)--(a3)--(a4)--(a1)
(a2)--(a4); 
% =====================
% Diamond 2 (górny prawy)
% =====================
\node (b1) at ( 1,  2) {1};
\node (b2) at ( 2,  3) {1};
\node (b3) at ( 3,  2) {1};
\node (b4) at ( 2,  1) {1};
\draw[thick]
(b1)--(b2)--(b3)--(b4)--(b1)
(b2)--(b4);
% =====================
% Diamond 3 (dolny prawy)
% =====================
\node (c1) at ( 1, -1) {3};
\node (c2) at ( 2, 0) {3};
\node (c3) at ( 3, -1) {3};
\node (c4) at ( 2, -2) {3};
\draw[thick]
(c1)--(c2)--(c3)--(c4)--(c1)
(c2)--(c4);
% =====================
% Połączenia między diamondami
% =====================
\draw[thick] (a3)--(b1);
\draw[thick] (a1)--(c1);
\draw[thick] (b3)--(c3);
\end{tikzpicture}
\caption{An example of a cubic graph $G\in\mathcal{G}$ and its $\mc$-coloring.}
\label{fig:three-diamonds}
\end{figure}

    \begin{corollary}
For every positive integer $k$, there exists a connected cubic graph $G$ with \mbox{$\mc(G)=k$.}
\end{corollary}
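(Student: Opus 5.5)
The plan is to handle $k=1$ by hand and $k\ge 2$ with the family $\mathcal{G}$ together with Proposition~\ref{prop:cubic}(ii).

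For $k=1$ we take $G=K_4$. It is connected and cubic, and $\mc(K_4)=1$ by Observation~\ref{obs:simple_graph_classes}(i). Alternatively, $K_{3,3}$ works by Observation~\ref{obs:simple_graph_classes}(v).

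For $k\ge 2$ we take $d=k$ vertex-disjoint copies $D_1,\dots,D_k$ of the diamond. In $D_i$, let $a_i,b_i$ be the two vertices of degree $2$ and $x_i,y_i$ the two vertices of degree $3$. We add the edges $b_ia_{i+1}$ for $i\in[k]$, with indices taken modulo $k$.
\begin{itemize}
\item For $k=2$ this gives the edges $b_1a_2$ and $b_2a_1$, which are distinct, so the graph is still simple.
\item Every vertex now has degree $3$, and the cyclic arrangement makes the graph connected.
\item Hence $G\in\mathcal{G}$, it has order $n=4k$, and it is exactly the ring of diamonds shown in Fig.~\ref{fig:three-diamonds}.
\end{itemize}

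Next we verify the hypotheses of Proposition~\ref{prop:cubic}(ii), namely that $G$ is $(\text{claw},K_4)$-free and contains exactly $k$ diamonds.
\begin{itemize}
\item \emph{Claw-free at $x_i$ or $y_i$.} The three neighbours of $x_i$ are $a_i,b_i,y_i$, and $y_i$ is adjacent to both $a_i$ and $b_i$. So $N(x_i)$ contains an edge and $x_i$ is not the centre of a claw; the same holds for $y_i$.
\item \emph{Claw-free at $a_i$ or $b_i$.} The neighbours of $a_i$ are $x_i,y_i$ and one vertex outside $D_i$. Since $x_iy_i$ is an edge, $a_i$ is not the centre of a claw; the same holds for $b_i$.
\item \emph{No further triangles.} An added edge $b_ia_{i+1}$ would lie in a triangle only through a common neighbour of $b_i$ and $a_{i+1}$. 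The neighbourhoods of $b_i$ and $a_{i+1}$ are disjoint, which also holds when $k=2$. So every triangle lies inside some $D_i$.
\item \emph{Consequences.} No $D_i$ induces $K_4$, so $G$ is $K_4$-free. Since every triangle lies inside a single $D_i$, every diamond does too, so $G$ has exactly $d=k$ diamonds.
\end{itemize}

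Proposition~\ref{prop:cubic}(ii) then gives $\mc(G)=\frac{n-d}{3}=\frac{4k-k}{3}=k$.

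The only delicate step is checking that the connecting edges create no new triangles or diamonds, so that $d$ is exactly $k$. This is the disjoint-neighbourhood argument above, including the small case $k=2$.
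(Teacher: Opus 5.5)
Your proof is correct and is essentially the paper's argument. The paper also takes the ring-of-diamonds family $\mathcal{G}$ ($d\ge 2$ diamonds joined by edges between degree-$2$ vertices), observes that it is $(\text{claw},K_4)$-free of order $4d$, and applies Proposition~\ref{prop:cubic}(ii) to get $\mc(G)=d$; your check that the connecting edges create no new triangles or diamonds, and your separate $k=1$ case via $K_4$ (which the paper leaves implicit, since $\mathcal{G}$ requires $d\ge 2$), are useful added details rather than a different route.
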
    
%%%%%%%%%%%%%%%%%%%%%%%%%%%%%
\section{
%Maximum and minimum size of a graph $G$ with $\mc(G)=k$/ \cs{OR: 
Size, order, $\chi(G)$, and $\mc(G)$}  \label{sec:min-max}

%%%%%%%%%%%%%%%%%%%%%%%%%%%%%%%%%%%%%%%
In this section, different types of connections between the four invariants in the title are discussed. First, the maximum and minimum sizes of a graph are determined in terms of the order and $\cg$-chromatic number. The second subsection shows that, in general, the values of $\chi(G)$ and $\mc(G)$ are incomparable and that their difference can be arbitrarily large. In contrast, it is proved that  $\chi(G) + \mc(G) \leq n +1$ holds for every $n$-vertex graph $G$.

%%%%%%%%%%%%%%%%%%%%%%%%%%%%%%%%%%%%%%%%%%
\subsection{Size, order, and $\mc(G)$} \label{subsec:size-order-mc}
%%%%%%%%%%%%%%%%%%%%%%%%%%%%%%%%%%%%%%%%%%

   Tur\'an's celebrated theorem~\cite{Turan} determines the maximum size of a $K_{k+1}$-free graph $G$ on $n$ vertices. This value (the Tur\'an number $t(n,k)$) equals the maximum number of edges in an $n$-vertex graph $G$ with $\chi(G)=k$. The other extreme is the minimum number of edges when $\chi(G)=k$. If the connectivity of $G$ is not required, this minimum size is clearly ${k \choose 2}$. 

   In this section, we answer the analogous questions for the majority C-coloring. For two given integers $n$ and $k$ with $n \ge k \ge 1$, let $M(n,k)$ and $m(n,k)$ denote the maximum and minimum number of edges, respectively, in an $n$-vertex graph $G$ that has $\mc(G)=k$. Concerning the maximum, Observation~\ref{obs:simple_graph_classes}(i) implies that $M(n,1)= {n \choose 2}$ holds for every positive integer $n$. 
   We proceed by first determining $M(n,2)$ and then extending the result to the higher values of $\mc(G)$. The section is concluded with a simple formula for $m(n,k)$.

   \begin{theorem} \label{thm:M(n,2)}
       For every integer $n \ge 2$, the maximum number of edges in an $n$-vertex graph $G$ with $\mc(G) =2$ is 
       $$
    M(n,2)= 
    \begin{cases}
	\frac{n(n-2)}{2}, &\text{if $n$ is even},\\
	\frac{(n-1)(n-2)}{2}, &\text{if $n$ is odd}.\\
    \end{cases}
    $$
   \end{theorem}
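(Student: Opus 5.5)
The plan is to prove a general upper bound on the size of any $n$-vertex graph with $\mc(G)\ge 2$, and then give matching constructions whose $\cg$-chromatic number is exactly $2$.

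\textbf{Upper bound.} Let $G$ have $\mc(G)=2$. By Observation~\ref{obs:basic}(ii) it suffices to take a $\cg$-coloring with two classes $V_1,V_2$, where $|V_1|=a\le |V_2|=b$, $a\ge 1$ and $a+b=n$. Write $e_1=|E(G[V_1])|$, $e_2=|E(G[V_2])|$ and $e_{12}$ for the number of edges between the classes.

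\begin{itemize}
\item By~\eqref{eq:majority-2}, every $v\in V_i$ has at least as many neighbours inside $V_i$ as outside it. Summing over $v\in V_i$ gives $e_{12}\le 2e_i$ for $i=1,2$.
\item Hence $|E(G)| \le e_2+3e_1\le \binom{b}{2}+3\binom{a}{2}$.
\item Set $f(a)=\binom{n-a}{2}+3\binom{a}{2}$. This is a convex quadratic in $a$, so on $1\le a\le \lfloor n/2\rfloor$ its maximum is attained at an endpoint.
\item At $a=1$ we get $f(1)=\binom{n-1}{2}=\frac{(n-1)(n-2)}{2}$.
\item At $a=n/2$ ($n$ even) we get $4\binom{n/2}{2}=\frac{n(n-2)}{2}$, which is larger than $f(1)$.
\item At $a=\frac{n-1}{2}$ ($n$ odd) a short computation gives $\frac{(n+1)(n-1)}{8}+\frac{3(n-1)(n-3)}{8}=\frac{(n-1)(n-2)}{2}$, the same as $f(1)$.
\end{itemize}

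So $|E(G)|$ never exceeds the claimed value. The only delicate point is the convexity/endpoint check, which is routine. The key idea is the double bound $e_{12}\le 2e_1$ and $e_{12}\le 2e_2$, using the constraint from the smaller class.

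\textbf{Constructions.}
\begin{itemize}
\item For even $n$, take $K_n$ minus a perfect matching, with $n(n-2)/2$ edges. Split it into two halves of size $n/2$ so that every matching edge goes across. Each vertex then has $n/2-1$ neighbours in its own class and $n/2-1$ in the other, so this is a $\cg$-coloring with two colors. Since $\delta=n-2$, Corollary~\ref{col:upperbounddelta} gives $\mc\le n/(n/2)=2$. Hence $\mc=2$.
\item For odd $n$, take $K_{n-1}$ together with an isolated vertex, with $(n-1)(n-2)/2$ edges. By Observation~\ref{obs:basic}(iv) and Observation~\ref{obs:simple_graph_classes}(i), $\mc=1+1=2$.
\item For $n=2$, the formula gives $0$, realised by $2K_1$.
\end{itemize}
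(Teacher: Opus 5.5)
Your proof is correct and follows essentially the paper's route: your bound $\binom{n-a}{2}+3\binom{a}{2}$ equals $\binom{n}{2}-a(n-2a+1)$, which is exactly the paper's count of at least $x(n-2x+1)$ edges missing from $K_n$, and you use the same endpoint analysis of a quadratic and the same extremal graphs. The only differences are cosmetic: you get the cross-edge bound by summing the majority condition ($e_{12}\le 2e_1$) rather than vertex by vertex, and you certify $\mc\le 2$ for $K_n$ minus a perfect matching via Corollary~\ref{col:upperbounddelta} instead of the paper's direct smallest-class argument.
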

   \begin{proof}
   Let $\overline{M}(n,2)$ denote the minimum number of edges needed to be removed from $K_n$ to obtain a spanning subgraph $G$ with $\mc(G) \ge 2$. As for $\overline{M}(n,2)$ the weaker condition $\mc(G) \ge 2$ is imposed, we may infer $M(n,2) \leq  {n \choose 2}- \overline{M}(n,2)$.
   
   For a given $n \ge 2$, let $x$ be an integer so that $1 \leq x \leq \lfloor \frac{n}{2} \rfloor$.   Consider a graph $G$ on $n$ vertices that has a $\cg$-coloring with two colors such that $V_1$ and $V_2$ are the color classes and  $|V_1|=x$. Then $|V_2|=n-x$ and $|V_1| \leq |V_2|$. Let $F(x)$ denote the minimum number of edges that have to be removed from $K_n$ to obtain such a graph $G$. Thus $\overline{M}(n,2)$ equals the minimum value of $F(x)$ over the set $\{1, \dots, \lfloor \frac{n}{2} \rfloor\}$.

   Consider a vertex $v \in V_1$. Since $v$ has at most $x-1$ neighbors in $V_1$, the majority condition~\eqref{eq:majority-2} implies $|N(v) \cap V_2|\leq x-1$. Consequently, at least $|V_2|-(x-1) = n-2x+1$ edges incident to $v$ must be removed from $K_n$. Since this is true for every vertex in $V_1$, and these removed edges are pairwise different, $F(x) \geq x(n-2x+1)$ follows. In fact, as $|V_1| \leq |V_2|$, it is easy to find an arrangement that proves $F(x) = x(n-2x+1)$. %Consider now the function $F(x)$ on the real interval $[1, \lfloor n/2 \rfloor ]$. 
   Since 
   \begin{align*}
       F(x) &= x(n-2x+1)= -2x^2+(n+1)x\\
            &=-2\Bigl(x-\frac{n+1}{4}\Bigr)^2+ \frac{(n+1)^2}{8},
   \end{align*}
   the minimum value of $F(x)$ over $\{1, \dots, \lfloor \frac{n}{2} \rfloor \}$ is either $F(1)$ or $F(\lfloor \frac{n}{2} \rfloor)$. Observe that $F(1)=n-1$ for every $n \ge 2$.

   If $n$ is even, then
   $$
   F\left(\left\lfloor \frac{n}{2} \right\rfloor \right) = F\left(\frac{n}{2}\right) = \frac{n}{2} \leq n-1 =F(1).
   $$
    This implies $\overline{M}(n,2) = \frac{n}{2}$ and, in turn, 
    \begin{equation} \label{eq:M(n,2)-even}
      M(n,2) \leq  {n \choose 2} - \frac{n}{2} 
     %= \frac{n(n-1)}{2} - \frac{n}{2} 
     = \frac{n(n-2)}{2}.  
    \end{equation}

     If $n$ is odd, then
   $$
   F\left(\left\lfloor \frac{n}{2} \right\rfloor \right) = F\left(\frac{n-1}{2}\right) = \frac{n-1}{2} \cdot 2 = n-1 =F(1).
   $$
    Thus $\overline{M}(n,2) = n-1$ and
    \begin{equation} \label{eq:M(n,2)-odd}
        M(n,2) \leq  {n \choose 2} - (n-1) 
     %= \frac{n(n-1)}{2} - (n-1) 
     = \frac{(n-1)(n-2)}{2}
     = {n-1 \choose 2}.
    \end{equation}

     Finally, we point out that~\eqref{eq:M(n,2)-even} and \eqref{eq:M(n,2)-odd} hold with equality. It suffices to show that the removal of $\overline{M}(n,2)$ edges from $K_n$ may indeed yield a graph $G_n$ with the equality $\mc(G_n)=2$, for every $n \ge 2$.
     If $n$ is even, we remove a perfect matching from $K_n$ to obtain $G_n$. A $\cg$-coloring of $G_n$ with two colors is easy to find. Using three colors however is not possible as for the smallest color class $V_1$, a vertex $v \in V_1$ has at most $\lfloor \frac{n}{3} \rfloor-1$ neighbors in $V_1$ while $\deg_{G_n}(v) =n-2 > 2 \lfloor \frac{n}{3} \rfloor -2$, contradicting the majority condition~\eqref{eq:majority-1}. 
     If $n$ is odd, then one example is the graph $G_n$ consisting of two components: one is a complete graph of order $n-1$, the other is an isolated vertex. It is clear that $G_n$ can be obtained by removing $n-1$ edges from $K_n$ and that $\mc(G_n)=2$ holds.
   \end{proof}

Note that Theorem~\ref{thm:M(n,2)} additionally confirms Corollary~\ref{cor:Kn-e}. Moreover, Theorem~\ref{thm:M(n,2)} will serve as the base case for the induction when the following more general statement is proved. 

   \begin{theorem} \label{thm:M(n,k)}
       For every two integers $n \ge k \ge 2$, the maximum number of edges in an $n$-vertex graph $G$ with $\mc(G) =k$ is 
      \begin{equation} \label{eq:thm-M(n,k)}
    M(n,k)= 
    \begin{cases}
	{n-k+1 \choose 2 }, &\text{if $n-k$ is odd},\\
	{n-k+2 \choose 2 }- \frac{n-k+2}{2}, &\text{if $n-k$ is even}.\\
    \end{cases}
    \end{equation}
   \end{theorem}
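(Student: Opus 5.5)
Write $N=n-k+2$. The formula says $M(n,k)=M(N,2)$: for $N$ even this is $\binom{N}{2}-\frac{N}{2}=\frac{N(N-2)}{2}$, and for $N$ odd it is $\binom{N-1}{2}=\frac{(N-1)(N-2)}{2}$. So the plan is to show that an $n$-vertex graph with $\mc(G)=k$ has at most as many edges as an $(n-k+2)$-vertex graph with $\mc=2$, by induction on $k$ starting from Theorem~\ref{thm:M(n,2)}.

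\textbf{Lower bound (construction).} Take the extremal graph $G_N$ from the proof of Theorem~\ref{thm:M(n,2)} on $N$ vertices and add $k-2$ isolated vertices. By Observation~\ref{obs:basic}(iv), and since an isolated vertex has $\mc=1$, the resulting graph has $\mc=2+(k-2)=k$. Its number of edges is $M(N,2)$, which is exactly the value in~\eqref{eq:thm-M(n,k)}.

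\textbf{Upper bound.} We induct on $k$, with $k=2$ given by Theorem~\ref{thm:M(n,2)}. Let $G$ have $\mc(G)=k\ge 3$, and fix a $\mc$-coloring with classes $V_1,\dots,V_k$, where $V_k$ has minimum size $s\le n/k$. Merging $V_{k-1}$ and $V_k$ gives a $\cg$-coloring with $k-1$ colors, so $\mc\ge k-1$, but this does not directly give a graph with $\mc=k-1$. It is cleaner to bound edges directly. Each $v\in V_i$ has at most $|V_i|-1$ neighbours inside $V_i$, hence $\deg(v)\le 2(|V_i|-1)$. Consequently
\[
|E(G)|\le \sum_{i}|V_i|(|V_i|-1)\cdot\tfrac12\cdot 2\cdot\tfrac12\cdot 2/2 ,
\]
more precisely $2|E(G)|=\sum_v\deg(v)\le \sum_i |V_i|\min\{2(|V_i|-1),\,n-1\}$. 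The right-hand side is convex in the class sizes, so under $\sum|V_i|=n$ with every $|V_i|\ge1$ it is maximised by making $k-1$ classes as small as possible. If those classes are singletons, their vertices are isolated (a singleton class forces degree $0$), and the remaining $N-1$ vertices form a single class. That shape is too lax, so instead we keep $k-2$ singletons and reduce the problem to two classes on $N$ vertices, where Theorem~\ref{thm:M(n,2)} applies.

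\textbf{Main obstacle.} The difficulty is turning this convexity heuristic into a rigorous comparison. A configuration with one big class plus $k-1$ singletons gives the bound $\binom{N-1}{2}$, which is fine. But a configuration with several classes of size $\ge2$ needs an exchange argument. I would try the following: take the smallest class $V_k$ with $|V_k|=s\ge2$. Delete all edges incident to $V_k$, keep one vertex of $V_k$ as an isolated vertex, and merge the remaining $s-1$ vertices into the largest class while adding all edges between them and that class. Then check that the edge count does not drop and that the number of colours in the $\cg$-coloring is still at least $k$ (the isolated vertex keeps its own colour). Verifying this inequality is the crux: it needs $s(s-1)/2$ plus the cross edges (at most about $s\cdot s$ by the majority condition) to be bounded by $(s-1)|V_1|$. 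Since $|V_1|\ge s$, this should hold.

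Iterating the exchange until every class but one is a singleton leaves $k-1$ isolated vertices and one class on $n-k+1$ vertices. That bound, $\binom{n-k+1}{2}$, is exactly the formula when $n-k$ is odd. When $n-k$ is even it overshoots, and the two-class structure with $N$ even (Theorem~\ref{thm:M(n,2)}) must be used for the last two classes. There one has to argue, via the $\mc(G)=k$ exactly requirement together with the construction of $G_N$, that a complete graph on $n-k+1$ vertices plus isolated vertices has $\mc=k$ but is beaten by the other configuration. I expect handling this parity case cleanly to be the fiddliest step.
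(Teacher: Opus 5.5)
Your lower-bound construction is correct and coincides with the paper's. The upper bound has a genuine gap, because the exchange step does not preserve the edge count. Deleting \emph{all} edges at the smallest class $V_k$ can cost $\binom{s}{2}+s(s-1)=\frac32 s(s-1)$ edges, since each vertex of $V_k$ may have $s-1$ neighbours outside its class. You add back only $(s-1)|V_1|$, so you would need $|V_1|\ge\frac32 s$; your inequality does not follow from $|V_1|\ge s$. Concretely, take three triangles on $\{a_i\}$, $\{b_i\}$, $\{c_i\}$ together with the $9$-cycle $a_1b_1c_1a_2b_2c_2a_3b_3c_3a_1$. This graph has $18$ edges and a $\cg$-coloring with these three classes, but your exchange leaves only $15$ edges.

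The plan to iterate until $k-1$ classes are singletons cannot be repaired by any better exchange. It would yield $|E(G)|\le\binom{n-k+1}{2}$, and for even $n-k\ge 2$ this is \emph{smaller} than $\binom{n-k+2}{2}-\frac{n-k+2}{2}$. You said it overshoots; it is the other way round, and your own construction exceeds it. For instance, in $K_N$ minus a perfect matching with its two colour classes of size $N/2$, isolating a vertex costs $N-2$ edges. Only $N/2-1$ missing edges are left to add.

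The missing idea, which is the paper's route, is to do one economical exchange per induction step, only while $k\ge 3$, and then recurse.
\begin{itemize}
\item Take one vertex $v_1$ of a smallest class $V_1$ and delete only its at most $2|V_1|-2$ edges. Keep every edge at $X=V_1\setminus\{v_1\}$.
\item Add the missing edges between $X$ and one other class $V_s$, which absorbs $X$.
\item If some class is strictly larger than $V_1$, use it as $V_s$. Each $u\in X$ had at most $|V_1|-1$ neighbours there, so it gains at least two edges, which covers the loss.
\item If all classes have size $|V_1|$, take the $V_s$ with the fewest edges to $X$. At most $(|V_1|-1)^2$ edges leave $X$ and there are $k-1\ge 2$ candidate classes. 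So at most $\lfloor (|V_1|-1)^2/2\rfloor$ of those edges go to $V_s$, and a short computation shows the gain still covers the loss.
\end{itemize}
In both cases $\{v_1\}$, $V_s\cup X$ and the remaining classes form a $\cg$-coloring with $k$ colours in which $v_1$ is isolated. Then $G'-v_1$ has $n-1$ vertices, $\mc(G'-v_1)\ge k-1$, and $n-k$ keeps its parity. The induction hypothesis, stated for graphs with $\mc\ge k-1$, then applies. The balanced two-class situation, where exchanges fail, is left to Theorem~\ref{thm:M(n,2)}, which handles it exactly.
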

   \begin{proof}
   First remark that, for $k=2$, these formulas are equivalent to those in Theorem~\ref{thm:M(n,2)}. Hence the statement holds for $k=2$ and any $n \ge 2$. We proceed by induction on $k$ by assuming  $k\ge 3$ and that the statement is true for the smaller values of $k$. 
   
   Let $G$ be a graph with $n$ vertices and $\mc(G) \ge k$. Let $V_1, \dots V_k$ be the (nonempty) color classes in a $\cg$-coloring of $G$ and $n_i=|V_i|$ for $i \in [k]$. We assume, without loss of generality, that $n_1 \leq n_2 \leq \cdots \leq n_k$. By $H(x)$, we denote the maximum number of edges in an $n$-vertex graph $G$ when $n_1=x$ and $\mc(G) \ge k$. Observing that $x \in \{1, \dots , \lfloor \frac{n}{k} \rfloor \}$, we distinguish the following three cases. 
   \begin{description}
       \item[Case 1. $n_1=1$] 
       By the majority condition~\eqref{eq:majority-1}, the vertex $v_1$ in $V_1$ is an isolated vertex of $G$. Therefore, $\mc(G) \ge k$ implies $\mc(G-v_1) \ge k-1$ and we can apply the hypothesis for $G-v_1$. If $n-k$ is odd, then so is $(n-1)-(k-1)$ and we obtain
       $$H(1) \leq {(n-1)- (k-1) +1 \choose 2}= {n-k+1 \choose 2}.
       $$
       If $n-k$ is even, then so is $(n-1)-(k-1)$ and therefore,
       \begin{align*}
           H(1) &\leq {(n-1)- (k-1) +2 \choose 2}- \frac{(n-1)-(k-1)+2}{2}\\
       &= {n-k+2 \choose 2 }- \frac{n-k+2}{2}.
       \end{align*}
         In both cases, the formulas in~\eqref{eq:thm-M(n,k)} are upper bounds on $H(1)$.  
       \item[Case 2. $2 \leq n_1 <\frac{n}{k}$]  
       We modify $G$ to obtain a graph $G'$ with a $\cg$-coloring that uses $k$ colors and the smallest color class contains only one vertex. We also ensure that the size of $G$ does not decrease in the process.

       Let $V_1, \dots, V_k$ be the color classes for $G$. Choose an arbitrary vertex $v_1$ from $V_1$ and let $X=V_1 \setminus \{v_1\}$. Since $n_1 \ge 2$, the cardinality of $X$, denoted by $x$, is at least $1$. To construct $G'$, delete all edges incident to $v_1$ and add all edges (that were missing in $G$) between $X$ and $V_k$. 

       We first prove that $m(G)$ and $m(G')$, the number of edges in $G$ and $G'$ respectively, satisfy $m(G) \leq m(G')$. Vertex $v_1 $ has at most $n_1-1$ neighbors in $V_1$. By the majority condition~\eqref{eq:majority-1}, $\deg_G(v_1) \leq 2n_1-2$ follows. Therefore, we deleted at most $2n_1-2$ edges from $G$. By the same majority condition, every vertex $u \in X$ has at most $n_1-1$ neighbors from $V_k$ in $G$. The condition $n_1 < \frac{n}{k}$ implies $n_k \ge n_1+1$, and consequently, at least two new edges incident to $u$ were added to obtain $G'$. Therefore, the number of new edges in $G'$ is at least $2x$, and we may derive 
       $$m(G') \ge m(G) -(2n_1-2) + 2x = m(G). 
       $$
       Next, we show that $V_1', \dots, V_k'$, where $V_1'=\{v_1\}$, $V_k'=V_k \cup X$, and $V_i'=V_i$ for $i\in \{2, \dots, k-1\}$, defines a $\cg$-coloring for $G'$. The majority condition~\eqref{eq:majority-2} clearly holds for the isolated vertex $v_1$. For the vertices in $\bigcup_{i=2}^{k-1} V_i$, the number of neighbors from the same color class remained unchanged, and the neighbors from different classes remained the same or decreased by $1$, and~\eqref{eq:majority-2} still holds in $G'$. For every vertex $u \in X$, since the majority condition holds for $u$ in $G$, and further by the construction of $G'$, we have that
       \begin{align*}
           |N_{G'}(u) \cap V_k'| &\ge |N_{G}(u) \cap V_1 | \ge \Bigl|N_{G}(u) \cap \bigcup_{i=2}^{k-1} V_i \Bigr|\\
           &= \Bigl|N_{G'}(u) \cap \bigcup_{i=2}^{k-1} V_i' \Bigr|
       = \Bigl|N_{G'}(u) \cap \bigcup_{i=1}^{k-1} V_i' \Bigr|.
       \end{align*}
       That is, $u$ satisfies the condition~\eqref{eq:majority-2} in $G'$. The last case to check is when $u \in V_k$. During the construction of $G'$, vertex $u$ might gain some new neighbors from $X \subseteq V_k'$, and it might lose one neighbor, namely $v_1$, which is outside $V_k'$. It implies that~\eqref{eq:majority-2} remains true for $u$ in $G'$. We conclude that $\mc(G') \ge k$ and there is a $\cg$-coloring of $G'$ where the smallest color class consists of an isolated vertex.

       Consequently, for every graph $G$ satisfying the conditions of this case, we can construct a graph $G'$ with $m(G') \ge m(G)$ such that a $\cg$-coloring of $G'$ belongs to Case~1. Formally,
       $$m(G) \leq m(G') \leq H(1)$$
       and hence, the relevant formula in~\eqref{eq:thm-M(n,k)} gives an upper bound on the size of $G$.
       \item[Case 3. $n_1=\frac{n}{k}$]
       In this case, the color classes are of the same cardinality $\frac{n}{k}$. Given a graph $G$ that admits such a $\cg$-coloring, we construct $G'$ as in Case~2. The only difference is that instead of always choosing the color class $V_k$ to be replaced with $V_k \cup X$, we choose a color class $V_s$ with $s \ge 2$ such that the number of edges between $V_s$ and $X$ is the smallest in $G$. 
       
       By the majority condition~\eqref{eq:majority-2}, there are at most $(n_1-1)^2$ edges between $X$ and $\bigcup_{i=2}^k V_i$ in $G$. Since $k \ge 3$, the number of edges between $X$ and $V_s$ is at most $\lfloor \frac{1}{2} (n_1-1)^2 \rfloor$. When constructing $G'$, the at most $2n_1-2$ edges incident to $v_1$ are removed, and at least
       $(n_1-1)|V_s|- \lfloor \frac{1}{2} (n_1-1)^2 \rfloor$ new edges are added between $X$ and $V_s$. If $n_1 \ge 3$, then the size of $G'$ can be estimated as 
       \begin{align} \label{eq:case-3}
                m(G') &\ge m(G) -(2n_1-2) + (n_1-1)n_1 - \Bigl\lfloor \frac{(n_1-1)^2}{2} \Bigr\rfloor\\
        \nonumber  &\ge m(G) + \frac{n_1^2}{2}-2n_1+\frac{3}{2} \ge m(G).
         %&=  m(G) + \frac{1}{2}(n_1-2)^2-0.5          
       \end{align} 
       If $n_1=2$, then $\lfloor \frac{1}{2} (n_1-1)^2 \rfloor=0$ and a direct substitution into~\eqref{eq:case-3} shows $m(G') \ge m(G)$. 

       What remains to check is that the partition into $V_1'=\{v_1\}$, $V_s'= V_s \cup X$, and $V_i'=V_i$ for $i \in [k] \setminus \{1,s\}$ defines a $\cg$-coloring of $G'$. It can be done along the same lines as for Case~2.
       We may infer again that $H(1)$, and consequently also the relevant formula from~\eqref{eq:thm-M(n,k)}, give an upper bound on $m(G)$.
   \end{description}
   The three cases above cover all possibilities. We therefore conclude that~\eqref{eq:thm-M(n,k)} gives an upper bound on the number of edges for every graph $G$ with  $\mc(G) \ge k$. 
   \medskip
   
   To complete the proof, we show that the upper bound is attained by a graph $G_{n,k}$ for every $n \ge k$ such that $\mc(G_{n,k})=k$ also holds. If $n-k$ is odd, let $G_{n,k}$ be the graph consisting of $k-1$ isolated vertices and a component which is a complete graph on $n-k+1$ vertices.  If $n-k$ is even, we take $k-2$ isolated vertices and a complete graph of order $n-k+2$ from which a perfect matching is removed to obtain $G_{n,k}$. It is straightforward to check that $\mc(G_{n,k})=k$ holds in both cases and the size of  $G_{n,k}$ corresponds to the values in~\eqref{eq:thm-M(n,k)}. 
   \end{proof}

   We conclude this subsection by identifying the minimum possible number of edges in a graph with a given order and $\cg$-chromatic number.

   \begin{proposition}  \label{prop:m(n,k)}
       For every two integers $n \ge k \ge 1$, the minimum number of edges in an $n$-vertex graph $G$ with $\mc(G) =k$ is
       $$ m(n,k)= n-k.
       $$
   \end{proposition}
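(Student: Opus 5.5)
The statement has two parts: a lower bound $|E(G)| \ge n-k$ whenever $\mc(G)=k$, and a construction attaining it.

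\textbf{Lower bound.} Let $G$ be an $n$-vertex graph with $\mc(G)=k$, and fix a $\mc$-coloring of $G$ with color classes $V_1,\dots,V_k$. By Observation~\ref{obs:basic}(iii), every color class $V_i$ induces a connected subgraph $G[V_i]$. A connected graph on $|V_i|$ vertices has at least $|V_i|-1$ edges, and the edge sets of the induced subgraphs $G[V_i]$ are pairwise disjoint. Summing over all classes gives
\[
|E(G)| \ge \sum_{i=1}^k (|V_i|-1) = n-k .
\]
Hence $m(n,k)\ge n-k$.

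\textbf{Upper bound (construction).} Here we need an $n$-vertex graph with exactly $n-k$ edges and $\mc$ exactly $k$. Take $G$ to be the disjoint union of $k-1$ isolated vertices and a star $S_{n-k}=K_{1,n-k}$ on $n-k+1$ vertices; this graph has $n-k$ edges. By Observation~\ref{obs:basic}(iv), $\mc(G)$ equals the sum of the values on the components. An isolated vertex contributes $1$, and $\mc(S_{n-k})=1$ by Observation~\ref{obs:simple_graph_classes}(iii), which also covers $n=k$, where the star $S_0$ is a single vertex. Therefore $\mc(G)=(k-1)+1=k$, as required.

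Both steps are routine, and I do not expect a real obstacle. The only delicate point is that the lower bound needs \emph{maximum} colorings, because only for those does Observation~\ref{obs:basic}(iii) give connected classes. Other graphs with few components would also attain the bound. For example, a tree has $\mc$ equal to the number of its color classes in an optimal coloring, but it is not an arbitrary tree with the right value, so the star-plus-isolated-vertices example is the cleanest choice.
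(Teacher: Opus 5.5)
Your proof is correct, and your construction (the star $K_{1,n-k}$ together with $k-1$ isolated vertices) is exactly the paper's. The lower bound takes a slightly different route. The paper notes that giving each component its own color is always a majority C-coloring. Hence $G$ has at most $\mc(G)=k$ components, and therefore at least $n-k$ edges. You instead use Observation~\ref{obs:basic}(iii) to get $k$ connected color classes in a $\mc$-coloring, and sum $|V_i|-1$ over them.

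Both are the same spanning-forest count. The paper applies it to the components, and you apply it to the finer partition into optimal color classes, since each connected class lies inside one component. The paper's version is marginally more elementary, because it needs only the trivial component coloring and no maximality argument. Yours shows directly that at equality every edge lies inside a color class.

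Either route gives a clean version of your vague closing remark. The extremal graphs are exactly the forests with $k$ components, each a tree $T$ with $\mc(T)=1$.
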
 
      \begin{proof}
       We can always obtain a majority C-coloring of a graph $ G$ by assigning different colors to its components. Therefore, $\mc(G)=k$ does not allow $G$ having more than $k$ components. Thus $G$ must contain at least $n-k$ edges, and $m(n,k) \ge n-k$.  On the other hand, for any two integers $n$ and $k$ with $n \ge k \ge 1$, we can construct a graph $F_{n,k}$ that contains a star on $n-k+1$ vertices and further $k-1$ isolated vertices. By Observation~\ref{obs:simple_graph_classes}(iii), the $\cg$-chromatic number of a star  is~$1$. Then Observation~\ref{obs:simple_graph_classes}(iv) implies $\mc(F_{n,k})=k$.  As $F_{n,k}$ contains $n-k$ edges, $m(n,k) \le n-k$ follows, and we may conclude $m(n,k)=n-k$.
   \end{proof}

   %%%%%%%%%%%%%%%%%%%%%%%%%%%
   %\section{Chromatic number and $\cg$-chromatic number}  \label{sec:chi-and-mc}
   %\cs{It is a very short section, we should move it somewhere. If we don't find a better place, it can be a subsection in a concluding remarks section. Or as a subsection at the end of Sec 3?}
   %\hf{Good idea. Maybe in 2.1 General upper bounds?} \cs{2.1 would be too soon. Proposition~\ref{prop:chi-mc} uses Prop~\ref{prop:cycle-k} and Theorem~\ref{thm:chi-mc-n} uses Theorem~\ref{thm:M(n,k)}.}
\subsection{Order, $\chi(G)$, and $\mc(G)$}
   We first show that the chromatic number and the $\cg$-chromatic number of graphs are incomparable. In contrast to this fact, we prove that the results in Section~\ref{subsec:size-order-mc} imply a sharp inequality between $\chi(G)$, $\mc(G)$, and $n(G)$, where $n(G)$ denotes the order of $G$. 
   \begin{proposition} \label{prop:chi-mc}
       For every two positive integers $k_1$ and $k_2$, there exists a graph $G$ with $\chi(G)=k_1$ and $\mc(G)=k_2$.
   \end{proposition}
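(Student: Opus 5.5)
The plan is a construction by disjoint union, using Observation~\ref{obs:basic}(iv): $\mc$ is additive over components, while $\chi$ of a disjoint union is the maximum of the chromatic numbers of the components. I will therefore realize $\chi(G)=k_1$ with one component that has $\mc$ equal to $1$, and then add components that raise $\mc$ without raising $\chi$.

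For the first component I take $K_{k_1}$. By Observation~\ref{obs:simple_graph_classes}(i) it has $\mc(K_{k_1})=1$, and clearly $\chi(K_{k_1})=k_1$. For the remaining components I add $k_2-1$ isolated vertices $K_1$; each has $\chi=1\le k_1$ and $\mc(K_1)=1$. Let
\[
G = K_{k_1} \cup (k_2-1)K_1 .
\]
Then $\chi(G)=\max\{k_1,1\}=k_1$, and by Observation~\ref{obs:basic}(iv),
\[
\mc(G)=\mc(K_{k_1})+(k_2-1)\cdot 1=k_2 .
\]

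This covers all pairs, including $k_1=1$, where $G$ is edgeless on $k_2$ vertices and $\mc(G)=k_2$. There is no real obstacle, since both invariants are computed from results already stated.

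If the paper intends connected graphs, the construction needs adjusting. I would then attach long paths or use cycle powers (Proposition~\ref{prop:cycle-k}), such as $C_n^k$ with $\mc=\lfloor n/(k+1)\rfloor$. The main difficulty there is controlling $\chi$ exactly while tuning $\mc$. I expect this to be manageable, for instance by joining $K_{k_1}$ to a long path at one vertex and choosing the path length so the total $\mc$ equals $k_2$. Since the statement does not require connectivity, however, the disjoint-union construction suffices.
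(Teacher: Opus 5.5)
Your construction is correct, but it takes a different route from the paper. The chromatic number of a disjoint union is the maximum over its components. Observation~\ref{obs:basic}(iv), together with $\mc(K_{k_1})=\mc(K_1)=1$, gives $\mc(K_{k_1}\cup (k_2-1)K_1)=k_2$, and the cases $k_1=1$ and $k_2=1$ are included automatically.

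The paper argues differently when $k_1,k_2\ge 2$. It takes the connected graph $G=P_n^k$ with $k=k_1-1$ and $n=k_1k_2$. It obtains $\chi(G)=k_1$ from $\omega(G)=k+1$ and an explicit $(k+1)$-coloring. It obtains $\mc(G)=\lfloor n/(k+1)\rfloor=k_2$ from Theorem~\ref{thm:power-paths}, whose hypothesis $n\ge k+2$ holds because $k_1k_2\ge 2k_1$.

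What each approach buys: yours is more elementary, needing only Observations~\ref{obs:basic}(iv) and~\ref{obs:simple_graph_classes}(i) rather than the path-power theorem. However, all the extra colors in your graph come from isolated vertices, so it says nothing about connected graphs. The paper's example shows that $\chi$ and $\mc$ are independent even among connected graphs whenever $k_1\ge 2$. For $k_1=1$, connectivity is impossible unless $k_2=1$, since an edgeless graph has $\mc$ equal to its order; the paper also uses isolated vertices there.

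Your tentative connected alternative can be completed without Theorem~\ref{thm:power-paths}. Attach a path on $2(k_2-1)$ vertices to one vertex of $K_{k_1}$. For $k_1\ge 3$ the clique is forced to be monochromatic, and every other color class needs at least two path vertices, so $\mc=k_2$. For $k_1=2$ the graph is just $P_{2k_2}$.
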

   \begin{proof}
       If $k_1=1$, we take $k_2$ isolated vertices as a graph $G$, and observe that $\chi(G)=1$ and $\mc(G)=k_2$. If $k_2=1$ then $G= K_{k_1}$ satisfies the requirements. For $k_1, k_2 \ge 2$, we define $G$ as a power of a path. Let $G= P_n^k$ with $k=k_1-1$ and $n=k_1k_2$. Since 
       %As $G$ is a perfect graph and 
       $\omega(G)=k+1=k_1$, and a  classical vertex coloring of $G$ with $k_1$ colors is easy to obtain, we have $\chi(G)=k_1$.  
       On the other hand, Theorem~\ref{thm:power-paths} implies $\mc(G)= \lfloor \frac{n}{k+1} \rfloor = k_2 $ that verifies the statement.
   \end{proof}

As an immediate consequence of Proposition~\ref{prop:chi-mc}, the parameters 
$\chi(G)$ and $\mc(G)$ are also independent in the sense that the difference $\mc(G)-\chi(G)$ can take arbitrarily large positive as well as arbitrarily small negative values.
   
   \begin{theorem} \label{thm:chi-mc-n}
       If $G$ is a graph of order $n$, then $\chi(G)+ \mc(G) \leq n+1$. Further, the inequality is sharp.
   \end{theorem}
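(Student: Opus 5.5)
We will prove $\chi(G)+\mc(G)\le n+1$ by combining Theorem~\ref{thm:M(n,k)} with the classical fact that a graph with chromatic number $\chi$ has at least ${\chi \choose 2}$ edges. Indeed, in a proper coloring with $\chi(G)$ colors that uses the minimum number of colors, every two color classes are joined by an edge; otherwise the two classes could be merged. So let $k=\mc(G)$ and $c=\chi(G)$. If $k=1$, the inequality reads $c\le n$, which is trivial. Hence we may assume $k\ge 2$ and $n\ge k$.

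In that case Theorem~\ref{thm:M(n,k)} gives $|E(G)|\le M(n,k)$. If $n-k$ is odd, then $M(n,k)={n-k+1\choose 2}$. If $n-k$ is even, then $M(n,k)={n-k+2\choose 2}-\frac{n-k+2}{2}$, and this is strictly less than ${n-k+2\choose 2}$ but at least ${n-k+1\choose 2}$.

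Comparing ${c\choose 2}\le |E(G)|\le M(n,k)$ in the odd case directly gives $c\le n-k+1$. In the even case we only get ${c\choose 2}<{n-k+2\choose 2}$, so $c\le n-k+1$ again, since ${x\choose 2}$ is increasing for $x\ge 1$. In both cases $c+k\le n+1$.

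This is the main (mild) obstacle. The even case must be handled with the strict inequality, because the crude bound $M(n,k)\le{n-k+2\choose 2}$ would only give $c\le n-k+2$. The key is that $\frac{n-k+2}{2}>0$ makes the inequality strict.

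For sharpness, take the graph from the proof of Theorem~\ref{thm:M(n,k)} with $n-k$ odd: $K_{n-k+1}$ together with $k-1$ isolated vertices. By Observation~\ref{obs:basic}(iv) and Observation~\ref{obs:simple_graph_classes}(i), its $\cg$-chromatic number is $k$, and its chromatic number is $n-k+1$, so the sum equals $n+1$. The construction works equally well for any $n\ge k\ge1$, regardless of parity. Even simpler, $K_n$ itself gives $\chi+\mc=n+1$, and so does the edgeless graph on $n$ vertices.
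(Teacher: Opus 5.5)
Your proof is correct and follows essentially the same route as the paper. Both combine $|E(G)|\ge \binom{\chi(G)}{2}$ with the edge bound of Theorem~\ref{thm:M(n,k)}, which lies strictly below $\binom{n-k+2}{2}$ in both parities, and both use $K_n$ or the edgeless graph for sharpness. The only difference is cosmetic: you fix $k=\mc(G)$ and bound $\chi(G)$ directly, so you need Theorem~\ref{thm:M(n,k)} only as stated (for $\mc(G)=k$), whereas the paper fixes $\chi(G)=k$, argues by contradiction from $\mc(G)\ge n+2-k$, and so relies on the ``$\mc(G)\ge k$'' form of the bound established in that theorem's proof.
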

   \begin{proof}
       Let $G$ be an arbitrary graph with $n$ vertices and $m$ edges. Let $\chi(G)=k$ and consider the color classes defined by a $k$-coloring of $G$. Since $k$ is the minimum number of colors, no two color classes can be merged, and therefore $m \ge {k \choose 2}$.

       Suppose now, contrary to the statement, that $\mc(G) \ge n+2-k$.  Theorem~\ref{thm:M(n,k)} implies that the strict inequality
       $$m < {n-(n+2-k)+2 \choose 2} ={k \choose 2}
       $$ 
 holds regardless of the parity of $n-k$. Since it contradicts the conclusion derived from $\chi(G)=k$, we may infer $\mc(G) \leq n +1 -k$, which finishes the proof of the inequality. 
       
       The sharpness can be shown by taking a complete graph $K_n$ and observing $\chi(K_n)+ \mc(K_n) = n+1$; or by taking an edgeless graph $G$ on $n$ vertices and observing $\chi(G)=1$ and $\mc(G)=n$. 
   \end{proof}

%%%%%%%%%%%%%%%%%%%%%%%%%%%%%%%%%%%%%%%%
\section{Linear-time algorithm for trees}
\label{sec:sec-tree}
By Proposition~\ref{prop:NP-c} the problem of deciding whether $\mc(G)\geq k$ is NP-complete for any fixed $k\geq 2$. 
In this section, we first show that majority 
C-colorings are closely related to edge cuts under certain conditions. We then investigate the time complexity of determining $\cg$-chromatic number for trees. In Section~\ref{sec:tree} we present a linear-time algorithm that outputs a $\mc$-coloring for any tree.

\subsection{Cut subgraphs with bounded degree}

\begin{definition}
     Given a graph $G = (V,E)$ and a subset $F$ of its edges, we denote by $[F]$ the subgraph of $G$ induced by $F$; i.e., $[F]$ has vertex set $\bigcup_{e\in F} e$ and edge set $F$. 
     We say that $[F]$ is a \emph{cut-$s$ subgraph} of $G$ if the graph $G^* = (V,E \setminus F)$ has at least $s$ components, $s \ge 1$.  
     Moreover, if a cut-$s$ subgraph $[F]$ has such a property that for each vertex $x \in V([F])$ its degree in $[F]$ is at most $ \frac{1}{2}\deg_G(x)$, it will be referred to as a \emph{cut-$(s,\leqslant)$ subgraph} of $G$.
\end{definition}

\begin{proposition}
    For every graph $G$ the following properties are equivalent:
    \begin{enumerate}[label=\normalfont(\roman*)]
        \item $\mc (G) \geq s$;
        \item $G$ has a cut-$(s,\leqslant)$ subgraph.
    \end{enumerate}
    \label{prop:cutdeg}
\end{proposition}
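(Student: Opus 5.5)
The plan is to prove the two implications separately. In both directions the cut edges play the role of the bichromatic edges of a coloring.

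For (i)$\Rightarrow$(ii), I take a $\cg$-coloring $\phi$ of $G$ with at least $s$ colors; by Observation~\ref{obs:basic}(ii) we may assume it uses exactly $s$ colors. Let $F$ be the set of bichromatic edges, that is, edges $uv$ with $\phi(u)\neq\phi(v)$. In $G^*=(V,E\setminus F)$ every edge is monochromatic, so each component of $G^*$ lies inside a single color class. Since all $s$ classes are nonempty, $G^*$ has at least $s$ components. For a vertex $x\in V([F])$, its degree in $[F]$ is $|N(x)\setminus V_{\phi(x)}|$. By~\eqref{eq:majority-2} this is at most $\frac12\deg_G(x)$. Hence $[F]$ is a cut-$(s,\leqslant)$ subgraph. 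One technical point: if $F=\emptyset$, then $[F]$ is the empty graph, and I would allow this as a degenerate cut subgraph (it arises, e.g., for disconnected $G$). Likewise for $s=1$, where the statement is trivial.

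For (ii)$\Rightarrow$(i), let $[F]$ be a cut-$(s,\leqslant)$ subgraph. Let $C_1,\dots,C_t$ be the components of $G^*$, with $t\ge s$. Define $\phi(v)=j$ for $v\in C_j$, which gives a surjection onto $[t]$. Then check~\eqref{eq:majority-2}. Every edge of $E\setminus F$ joins two vertices in the same component of $G^*$, so it is monochromatic. Therefore each neighbor $u$ of $v$ with $\phi(u)\neq\phi(v)$ is joined to $v$ by an edge of $F$. This gives $|N(v)\setminus V_{\phi(v)}|\le \deg_{[F]}(v)\le \frac12\deg_G(v)$ when $v\in V([F])$, and $0$ otherwise. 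So the number of same-colored neighbors is at least $\frac12\deg_G(v)$, and $\phi$ is a $\cg$-coloring with $t\ge s$ colors. Hence $\mc(G)\ge t\ge s$.

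The key subtlety in this direction is that an edge of $F$ may join two vertices of the same component. Such edges only make the inequality looser, since $\deg_{[F]}(v)$ overcounts the bichromatic edges, so the argument is unaffected.

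The main obstacle is thus not mathematical but definitional: making sure the correspondence \emph{components of $G^*$} $\leftrightarrow$ \emph{unions of color classes} is handled in the right direction. Components refine color classes in the first direction, and color classes are exactly the components in the second. This also requires the degenerate case $F=\emptyset$ to be admitted. Both are short checks.
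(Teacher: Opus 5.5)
Your proof is correct and follows the paper's argument essentially step for step. In both directions the cut edges are identified with the bichromatic edges: for (i)$\Rightarrow$(ii) the components of $G^*$ refine the color classes, and for (ii)$\Rightarrow$(i) one colors by the components of $G^*$. Your remarks on the degenerate case $F=\emptyset$ and on edges of $F$ lying inside a single component address points the paper leaves implicit, and you handle both correctly.
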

\begin{proof}
    Suppose that $\mc(G) \ge s$. Then there exists a majority C-coloring
$\phi$ of $G=(V,E)$ that uses exactly $s$ colors.
By the definition of a majority C-coloring, for every vertex $v \in V$, at least half
of its neighbors receive the same color as $v$. Equivalently, at most half of the
neighbors of $v$ are colored differently from $\phi(v)$.
Let $E' \subseteq E$ denote the set of non-monochromatic edges, that is, an edge
$e=xy \in E$ belongs to $E'$ if and only if $\phi(x) \neq \phi(y)$.
Since each vertex $x \in V$ has at most $\frac{1}{2}\deg_G(x)$ neighbors with color different
from $\phi(x)$, it follows that
\[
\deg_{[E']}(x) \le \frac{\deg_G(x)}{2}
\quad \text{for every } x \in V.
\]
Observe that every edge in $E \setminus E'$ connects two vertices of the same color.
Hence, each component of the graph $G^* = (V, E \setminus E')$
is monochromatic.
Since $\phi$ uses exactly $s$ colors, the graph $G^*$ has at least $s$ components.
Consequently, the edge set $E'$ forms a cut-$(s,\leqslant)$ subgraph of $G$.

Now suppose that the graph $G=(V,E)$ admits a cut-$(s,\leqslant)$ subgraph induced by an edge
set $F \subseteq E$.
Define a vertex coloring $\phi$ by assigning the same color to two vertices if and
only if they belong to the same component of the graph $G^* = (V, E \setminus F)$.
By the definition of a cut-$(s,\leqslant)$ subgraph, the graph $G^*$ has at least $s$ 
components, and hence the coloring $\phi$ uses at least $s$ colors.
On the other hand, if two adjacent vertices $x$ and $y$ receive different colors under
$\phi$, then $xy \in F$.
Since $\deg_{[F]}(x) \le \frac{1}{2}\deg_G(x)$ for every vertex $x \in V$, it follows that $x$
has at most half of its neighbors colored differently from $\phi(x)$.
Consequently, $\phi$ is a majority C-coloring of $G$ using at least $s$ colors,
which completes the proof.

\end{proof}
In view of the proof, a cut-$(s,\leqslant)$ subgraph corresponds precisely to a decomposition of $G$ into at least 
$s$ monochromatic components under the associated majority C-coloring. %In the remainder of this section, we use these two notions interchangeably whenever convenient.

\subsection{Linear-time algorithm for trees}\label{sec:tree}
Next we describe a linear-time algorithm that computes the $\cg$-chromatic number and outputs a $\mc$-coloring for trees.

\begin{theorem}
    The $\cg$-chromatic number and a $\mc$-coloring can be determined in linear-time for trees. 
\end{theorem}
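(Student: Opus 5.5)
By Proposition~\ref{prop:cutdeg}, computing $\mc(T)$ for a tree $T$ amounts to choosing a set $F$ of edges to cut, subject to $\deg_{[F]}(x)\le \frac12\deg_T(x)$ for every vertex $x$, so as to maximize the number of components of $T-F$. In a forest, deleting $|F|$ edges from a tree leaves exactly $|F|+1$ components. Hence
\[
\mc(T)=1+\max\{|F| : F\subseteq E(T),\ \deg_{[F]}(x)\le \lfloor \deg_T(x)/2\rfloor \text{ for all } x\}.
\]
This is a maximum $b$-matching problem on a tree with capacities $b(x)=\lfloor \deg_T(x)/2\rfloor$. Leaves have capacity $0$, which is consistent with Observation~\ref{obs:basic}(i). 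The coloring itself is then obtained by giving each component of $T-F$ its own color, as in the second half of the proof of Proposition~\ref{prop:cutdeg}.

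To solve the $b$-matching problem in linear time, I would root $T$ at an arbitrary vertex and process the vertices in post-order, which a single DFS provides. I would use the standard greedy for $b$-matchings on trees. When processing a vertex $v$ with parent $p$, we already know the residual capacity $r(v)$, defined as $b(v)$ minus the number of child edges already placed in $F$. If $r(v)>0$ and $r(p)>0$, we put $vp$ into $F$ and decrement both $r(v)$ and $r(p)$. Each edge is examined once, so the running time is $O(n)$. Equivalently, one can run a DP with two values per vertex: the best count in the subtree of $v$ when the parent edge is cut, and when it is not. Combining the children of $v$ means choosing the at most $b(v)$ (or $b(v)-1$) children with the largest positive gains. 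This can be done with linear-time selection, for a total of $O(\sum \deg)=O(n)$.

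The main obstacle is proving that the greedy is optimal. I would use an exchange argument that processes the leaves of the rooted tree bottom-up. Let $v$ be a deepest unprocessed vertex, with parent $p$, and suppose $r(v)>0$ and $r(p)>0$. Take an optimal $F^*$ that agrees with the greedy choices made so far but does not contain $vp$. If $p$ still has spare capacity in $F^*$, we simply add $vp$. Otherwise $F^*$ contains some other edge $pw$ at $p$ that has not yet been decided, and we swap $pw$ for $vp$. This keeps $|F^*|$ unchanged and respects all capacities: $v$ has spare capacity because everything below $v$ is already fixed and $r(v)>0$, and $w$ only gains capacity. By induction, the greedy output is therefore optimal. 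If this exchange step turns out to be delicate, I would fall back on the two-state DP, whose correctness follows directly from the independence of the subtrees.
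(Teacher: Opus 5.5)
Your proof is correct, but it takes a different route from the paper. The paper never passes to edge counts. It runs a two-state dynamic program on the rooted tree: $f(x)$ is the maximum number of components of $T(x)$ when $x$ may spend its whole budget $\frac12\deg_T(x)$ on child edges (parent edge kept), and $g(x)$ is the same quantity when one unit is reserved for a cut parent edge. It shows that $h(x)=g(x)-f(x)+1\in\{0,1\}$, so ordering the children by $h$ takes linear time. Children are combined by cutting the $c$ child edges of largest $h$, and the cut set is recovered by a second, preorder pass.

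Your observation that $T-F$ has exactly $|F|+1$ components turns the problem into maximum $b$-matching with $b(x)=\lfloor\deg_T(x)/2\rfloor$. This explains at once why the paper's gain $h$ is $0$ or $1$: one cut edge is one extra component. Your bottom-up greedy then produces an optimal $F$, and hence the coloring, in a single pass with no reconstruction phase. The price is the exchange lemma, whereas the DP's correctness rests only on the independence of subtrees. Your fallback DP is exactly the paper's $f/g$ recursion; you use linear-time selection where the paper exploits that the gains are $0/1$.

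Your exchange step works as stated. To make the induction airtight, also note that the greedy's skip decisions are forced. If $r(v)=0$ or $r(p)=0$, the already-decided edges at $v$, respectively at $p$, saturate that vertex's capacity in any $F^*$ that agrees with them, so $vp\notin F^*$. Hence the invariant ``$F^*$ agrees with all greedy decisions so far'' is preserved in that case too, not only when the greedy adds $vp$.
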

\begin{proof}
     Let $T$ be a tree. According to Proposition \ref{prop:cutdeg}, $\mc(T)$ is equal to the largest  $s$ such that $T$ has a cut-$(s,\leqslant)$ subgraph. First, we describe a procedure
 which calculates this largest number $s$ for trees. 
 %This value corresponds to the largest number of colors 
 
Consider a tree $T = (V,E)$ with a fixed root vertex $v$. For every vertex $x \in V$, $T(x)$  denotes the subtree rooted at $x$.  The parent vertex of $x$ is denoted by $p(x)$.   
The following functions $f$, $g$, and $h$ are defined for every $x \in V$, excluding leaves and the root of $T$. 
%If $x$ is not the root of $T$, we have as follows.

 \begin{itemize}
     \item $f(x)$: the largest number of components which can be obtained from $T(x)$ by deleting a cut subgraph of the following property: for each vertex $z \in V(T(x))$ its degree in this cut subgraph is at most $ \frac{1}{2}\deg_{T}(z)$, and $x$ has degree at most $\frac{1}{2}(\deg_{T(x)}(x)+1)$ in this cut subgraph. 

     %As the consequence, to ensure the appropriate number of neighbors of $x$ in the same color as $x$, in the relevant majority C-coloring of the entire tree $T$, we will need to ensure the same color for $p(x)$ as for $x$.

\item $g(x)$: the largest number of components which can be obtained from $T(x)$ by deleting a cut subgraph of the following property: for each vertex $z \in V(T(x))$ its degree in this cut subgraph is at most $ \frac{1}{2}\deg_{T}(z) $, and $x$ has degree at most $\frac{1}{2}(\deg_{T(x)}(x)-1)$ in this cut subgraph. 

%In this case, in the relevant majority C-coloring of $T$, the number of neighbors of $x$ in the same color as $x$ is already ensured in $T(x)$, independently of the color assigned to $p(x)$.

\item $h(x) = g(x) - f(x) + 1$.
 \end{itemize}
It is obvious by definition that $f(x) \geq g(x)$ holds for every $x \in V(T)$ for which these functions are defined, and therefore $h(x) \leq 1$.

Consider a cut subgraph $S$ of $T(x)$ in which $x$ has degree at most $\frac{1}{2}(\deg_{T(x)}(x)+1)$ and for each vertex $z \in V(T(x))$ its degree in this cut subgraph is at most $\frac{1}{2}\deg_{T}(z)$. Note that in case when all the children of $x$ are leaves, such a cut subgraph is empty and then $f(x) = g(x)$. If we delete $S$ from $T(x)$, we obtain $f(x)$ components. If we remove an edge $e$ incident to $x$ from $S$, we obtain a cut subgraph $S'= S  \backslash \{e\}$ where $x$ has a degree at most $\frac{1}{2}(\deg_{T(x)}(x)-1)$. Deleting $S'$ from $T(x)$ yields $f(x)-1$ components. This shows that $g(x) \geq f(x)-1$ and therefore $h(x) \in \{0,1\}$.

The values $f(x)$, $g(x)$ and $h(x)$ will be determined in postorder. 
If a vertex $x$ has only adjacent descendants that are leaves, then clearly, $f(x)=g(x)=1$, and $h(x)=1$.
Otherwise, assume that $x$ has $d$ children not being leaves, say $x_1,\ldots,x_d$. 
Recall that a cut-$(s \leqslant)$ subgraph, by definition, cannot contain an edge incident to a leaf.
We will take this fact into account in the subsequent calculations.
  \begin{itemize}
      \item If $x$ has at most $\frac{1}{2}(\deg_{T(x)}(x)+1)-1$ children not being leaves, then $f(x)=g(x)=1+\sum_{i=1}^d \max \{f(x_i)-1,g(x_i)\}=1+\sum_{i=1}^d g(x_i)$. Hence, $h(x)=1$ in such a case.
      \item If $x$ has more than $\frac{1}{2}(\deg_{T(x)}(x)+1)-1$ children not being leaves, i.e., if $d \geq \frac{1}{2}(\deg_{T(x)}(x)-1)$, then  we can suppose that the children of $x$ of degree at least 2 in $T$ are ordered according to decreasing $h$-value; i.e., $h(x_1) \geq h(x_2) \geq \cdots \geq h(x_d)$ (the adjacent leaves are not taken into account in the calculations). Let $c=\lfloor\frac{1}{2}(\deg_{T(x)}(x)+1)\rfloor$. Then we have
      $$f(x)=1+\sum_{i=1}^c g(x_i) + \sum_{i=c+1}^d (f(x_i)-1),$$
      and analogously
      $$g(x)=1+\sum_{i=1}^{c-1} g(x_i) +\sum_{i=c}^d (f(x_i)-1).$$
  \end{itemize}
Now, let us assume that the functions $f$, $g$, and $h$ are calculated for every $x \in V(T)$, excluding leaves and the root of $T$. If $x$ is the root of $T$, i.e. if $x=v$, we calculate the largest number of components which can be obtained from $T$ by deleting a cut subgraph of property: for each vertex $z \in V(T)$ its degree in this cut subgraph is at most $ \frac{1}{2}\deg_{T}(z)$, i.e. the value of $s$.
To calculate $s$ we use the following formula. Let $c'=\lfloor \frac{1}{2}\deg_T(v)\rfloor$.
$$
s=1+\sum_{i=1}^{c'} g(x_i) + \sum_{i=c'+1}^d (f(x_i)-1).$$

Since $h(x_i) \in \{0, 1\}$ for all $1 \leq i \leq d$, sorting these numbers takes $O(d)$ time. The computation of $f(x)$ and $g(x)$ can be done in time proportional to the number of children of $x$ not being leaves.  Thus, determining the value of $\mc(T) = s$ takes linear time for trees.

The corresponding cut-$(s,\leqslant)$ subgraph $S$ can be obtained by traversing the tree in preorder. We keep the notation for the children of $x$ not being leaves as $x_1,x_2,\ldots,x_d$ such that $h(x_1) \geq  h(x_2) \geq \cdots
\geq h(x_d)$ holds. Vertices that are leaves are not considered in this procedure; by definition, the edges between leaves and their parents are not part of the cut-$(s,\leqslant)$ subgraph $S$.
\begin{enumerate}
    \item If $x=v$ or $p(x)x \in S$ then $xx_i \in S$ if and only if $i \leq \frac{1}{2}\deg_T(x)$.
    \item If $p(x)x \in S$ then $xx_i \in S$ if and only if $i \leq \frac{1}{2}\deg_T(x)-1$.
\end{enumerate}

A corresponding $\mc$-coloring of tree $T$ can be obtained based on the proof of Proposition \ref{prop:cutdeg}, which completes the proof.
\end{proof}

\section{Sufficient conditions for $\mc(G)\ge 2$}  \label{sec:sufficient}

One of the most important questions of the satisfactory graph partition problem is to decide whether a graph $G$ is partitionable that is, with our terminology, whether $\mc(G) \ge 2$ holds. Recall that, by Proposition~\ref{prop:NP-c}, it is a hard problem. %By Proposition~\ref{prop:cycle-k}, we see that every $2$-regular graph except $C_3$ satisfies the inequality. 
The following sufficient conditions for the relation $\mc(G) \ge 2$ to hold are known from~\cite{BTV-2003, GeKob-2004, Moshi, Shaf}: (i) $G$ is a cubic graph different from $K_4$ and $K_{3,3}$; (ii) $G$ is $4$-regular and different from $K_5$; (iii)  $G$ is not a star and its girth is at least $5$.
Our goal in this section is to establish further sufficient conditions. 

\begin{theorem} \label{thm:partitionable} \label{thm:girth}
Let $G$ be a graph with maximum degree $\Delta(G)$, clique number $\omega(G)$, and girth $g(G)$.
\begin{enumerate}
[label=\normalfont(\roman*)]
    \item Let $\Delta(G)=k$. If $\omega(G) \geq \lceil \frac{k}{2}\rceil+1$ and the order of $G$ is greater than $(\lceil\frac{k}{2}\rceil +1)(\lfloor \frac{k}{2}\rfloor +1)$, then $\mc(G)\geq 2$.
     \item If $\Delta(G) \leq 4$ and the order of $G$ is greater than $3g(G)$, then $\mc(G)\geq 2$.
\end{enumerate}  
\end{theorem}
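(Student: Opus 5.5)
Both parts will use a single growing procedure. Choose an initial vertex set $W$ in which every vertex $w\in W$ already satisfies $|N(w)\cap W|\ge \frac12\deg_G(w)$. Then enlarge it greedily: as long as some vertex $u\notin W$ has more than half of its neighbors in $W$, move $u$ into $W$. Adding a vertex to $W$ can only raise the number of neighbors in $W$ for vertices already in $W$, so every vertex of $W$ keeps satisfying~\eqref{eq:majority-2}. When the procedure stops, every vertex outside $W$ has at least half of its neighbors outside $W$, and this includes isolated vertices. So if $V\setminus W\neq\emptyset$ at termination, the partition $\{W, V\setminus W\}$ is a $\cg$-coloring with two colors and $\mc(G)\ge 2$. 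What remains is to show that $W$ never swallows all of $V$, and for this I would use the cut size $e(W,V\setminus W)$ as a potential.

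The potential argument goes as follows. Suppose a vertex $u$ of degree $d$ is moved, and $u$ has $a>d/2$ neighbors in $W$. Its $a$ edges into $W$ leave the cut and its $d-a$ other edges enter it. The cut therefore changes by $d-2a<0$, so it drops by at least $1$. Hence the number of moves is at most the initial cut size $c_0$, and the final set satisfies $|W|\le |W_0|+c_0$. It then suffices to check that $|W_0|+c_0<n$ in each part.

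For (i), let $W_0$ be a clique $K$ on $\lceil k/2\rceil+1$ vertices, which exists because $\omega(G)\ge\lceil k/2\rceil+1$. Each $w\in K$ has $\lceil k/2\rceil\ge \frac12 k\ge\frac12\deg(w)$ neighbors inside $K$, so $K$ is a valid start. Each $w\in K$ also has at most $k-\lceil k/2\rceil=\lfloor k/2\rfloor$ neighbors outside $K$, which gives $c_0\le(\lceil k/2\rceil+1)\lfloor k/2\rfloor$. The final size is then at most
$$(\lceil k/2\rceil+1)+(\lceil k/2\rceil+1)\lfloor k/2\rfloor=(\lceil k/2\rceil+1)(\lfloor k/2\rfloor+1)<n,$$
so $V\setminus W\neq\emptyset$, as required.

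For (ii), I would assume $G$ has a cycle, since otherwise the girth is infinite and the hypothesis is vacuous. Let $W_0=V(C)$ for a shortest cycle $C$, of length $g=g(G)$. Every vertex of $C$ has at least two neighbors on $C$ and degree at most $4$, so it satisfies the majority condition inside $W_0$. Moreover, $C$ has no chords, because a chord would create a shorter cycle; for $g=3$ there is nothing to check. Hence each vertex of $C$ has at most $4-2=2$ neighbors outside $C$, giving $c_0\le 2g$ and a final size of at most $g+2g=3g<n$.

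The only genuinely delicate step is making sure the cut strictly decreases, so that moves can be counted against $c_0$. This is exactly why the move rule uses \emph{strictly} more than half. It also guarantees that at termination every outside vertex has at least half of its neighbors outside $W$, so non-strict satisfaction is preserved there.
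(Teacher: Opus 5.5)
Your proposal is correct and follows essentially the same route as the paper: start from a clique on $\lceil k/2\rceil+1$ vertices (resp.\ the vertex set of a shortest cycle), greedily absorb outside vertices having strictly more than half of their neighbors inside, and use the strictly decreasing cut size to bound the final set by $(\lceil k/2\rceil+1)(\lfloor k/2\rfloor+1)$ (resp.\ $3g(G)$). Your chordlessness remark in (ii) is harmless but not needed, since having at least two neighbors on $C$ already leaves at most two outside. Your explicit treatment of the acyclic case ($g(G)=\infty$) is a small addition that the paper omits.
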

\begin{proof}
    (i) Let $G=(V,E)$ be a graph satisfying the conditions of the statement. We begin constructing a $\cg$-coloring of $G$ with two color classes by defining a set $X_1$ containing the vertices of a complete subgraph of order $\lceil \frac{k}{2} \rceil+1$.
    Let us denote the number of edges between $X_1$ and $V\setminus X_1$ by $m_1$. 
    Since every vertex in $X_1$ has exactly $\lceil \frac{k}{2} \rceil$ neighbors in $X_1$ and at most $k-\lceil \frac{k}{2} \rceil = \lfloor \frac{k}{2} \rfloor$ neighbors in $V \setminus X_1$,  
    we may infer $m_1 \leq (\lceil\frac{k}{2}\rceil +1)\lfloor \frac{k}{2}\rfloor$.
    If there exists a vertex $v_1 \in V\setminus X_1$ that does not satisfy the majority condition~\eqref{eq:majority-2}; that is, if 
    \begin{equation} \label{eq:necessary-algo}
        |N(v_i) \cap X_i| > |N(v_i) \cap (V \setminus X_i)|
    \end{equation}
    holds with $i=1$, 
    then we move $v_1$ to $X_1$. Formally, we set $X_2=X_1 \cup \{v_1\}$. Let $m_2$ be the number of edges between $X_2$ and $V\setminus X_2$. It follows from~\eqref{eq:necessary-algo} and the construction of $X_2$ that
    \begin{equation*} %\label{eq:necessary-2}
       m_2= m_1 - |N(v_1) \cap X_1| + |N(v_1) \cap (V \setminus X_1)| \leq m_1-1. 
    \end{equation*}
    By the definition of $X_1$, every vertex $u \in X_1$ satisfies the majority condition $|N(u) \cap X_1| \ge \frac{1}{2} \deg_G(u)$, and it remains true for $N(u) \cap X_2$. By the definition of $X_2$ and~\eqref{eq:necessary-algo}, the inequality $|N(v_1) \cap X_2| \ge \frac{1}{2} \deg_G(v_1)$ is also true.
    
    We repeat this procedure and choose a vertex $v_i$ in the $i$th step which satisfies~\eqref{eq:necessary-algo}, while it is possible, and define $X_{i+1}=X_i \cup \{v_i\}$. Analogously to the first step, for the number of edges between $X_{i+1}$ and its complement, we have $m_{i+1} \leq m_i -1$. Therefore, $m_{i+1} \leq m_1 -i$. It also remains true that $|N(u) \cap X_{i+1}| \ge \frac{1}{2} \deg_G(u)$ holds for each $u \in X_{i+1}$. %Note that $|X_{i+1}|= \lceil k/2 \rceil+1+i $.

    Since $m_{i+1} \leq m_1 -i$ for every index $i$, after at most $m_1$ steps, we obtain a set $X_{j}$ such that no vertex $v_{j} \in V\setminus X_j$ satisfies~\eqref{eq:necessary-algo}. Equivalently, every vertex from $V \setminus X_j$ complies with the majority condition for the partition $\{X_j,V \setminus X_j\} $. As observed earlier, the same is true for the vertices in $X_j$.
    
    What remains to prove is $V \setminus X_j \neq \emptyset$. Indeed,  we have 
    $$|X_j| \leq \Bigl\lceil\frac{k}{2} \Bigr\rceil +1+m_1\leq \Bigl\lceil\frac{k}{2} \Bigr\rceil + 1 +  \Bigl(\Bigl\lceil\frac{k}{2} \Bigr\rceil+1 \Bigr)\Bigl\lfloor \frac{k}{2} \Bigr\rfloor
    =\Bigl(\Bigl\lceil\frac{k}{2} \Bigr\rceil+1 \Bigr) \Bigl(\Bigl\lfloor\frac{k}{2} \Bigr\rfloor+1 \Bigr).
    $$ 
Hence, by our assumption, $|X_j|$ is smaller than the order of $G$, and $V \setminus X_j \neq \emptyset$. 
The partition $\{X_j,V \setminus X_j\} $ therefore defines a $\cg$-coloring and the proof is complete for (i).
\medskip

(ii) We proceed along the same lines as in the proof of (i). The only difference is that at the beginning, we choose a cycle subgraph $C$ of order $g(G)$, and define $X_1$ as the vertex set of $C$. Since $\Delta(G) \leq 4$, the majority condition holds for each vertex in $X_1$. As for the number of edges between $X_1$ and $V \setminus X_1$, it holds that $m_1 \leq 2g(G)$. We construct the sets $X_2, \dots , X_j$ such that for every $i \ge 1$, we choose a vertex $v_{i}$ that satisfies~\eqref{eq:necessary-algo} and define $X_{i+1}= X_i \cup \{v_i\}$. Again, the procedure ends after at most $m_1$ steps, and we obtain a set $X_j$ so that every vertex satisfies the majority condition for the partition $\{X_j, V \setminus X_j\} $. As $|X_j| \leq |X_1| + m_1 \leq 3g(G) $, we may conclude $|V \setminus X_j| \ge 1$, and the proof is complete.
\end{proof}

By refining the counting argument used in the proof of Theorem~\ref{thm:partitionable}(ii) and restricting attention to $4$-regular graphs, we obtain the following result.

\begin{proposition}
\label{cor:4reg}
       Let $k$ be  a positive integer and let $G$ be a 4-regular graph with girth $g(G)$. If the order of the graph $G$ is at least $2g(G)+1$ then $\mc(G)\geq 2$.
       %\hfill $\Box$
    
\end{proposition}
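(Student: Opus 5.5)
The plan is to rerun the greedy augmentation from the proof of Theorem~\ref{thm:partitionable}(ii), with a sharper count of the edges leaving the current set $X_i$. Let $g=g(G)$, let $C$ be a shortest cycle, and set $X_1=V(C)$. Since $G$ is $4$-regular, every vertex of $C$ has exactly two neighbours on $C$. No vertex of $C$ can have more than these two: a chord would create a shorter cycle (for $g=3$ a chord is impossible anyway). So each vertex of $X_1$ has exactly two neighbours outside $X_1$, and $m_1=2g$. As before, while some $v_i\in V\setminus X_i$ satisfies~\eqref{eq:necessary-algo}, we move it into $X_i$. Vertices already in the set keep satisfying the majority condition, and the procedure stops with a set $X_j$ for which $\{X_j,V\setminus X_j\}$ satisfies~\eqref{eq:majority-2} at every vertex, provided $V\setminus X_j\neq\emptyset$.

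The refinement is the decrease of the cut. Because $G$ is $4$-regular, a vertex $v_i$ satisfying~\eqref{eq:necessary-algo} has $|N(v_i)\cap X_i|\ge 3$. Hence
\[
m_{i+1}=m_i-|N(v_i)\cap X_i|+|N(v_i)\setminus X_i|\le m_i-2 ,
\]
so $m_{i+1}\le m_1-2i=2g-2i$. Since $m_{i}\ge 0$, at most $g$ vertices are added, giving $|X_j|\le 2g$.

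It remains to rule out $X_j=V$ when $n\ge 2g+1$; this is the step needing care. If $X_j=V$, then $m_j=0$ and $n=|X_j|\le g+g=2g$, which contradicts $n\ge 2g+1$. So $V\setminus X_j\neq\emptyset$ and the two-class partition is a $\cg$-coloring, which gives $\mc(G)\ge 2$.

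The obstacle I expect is justifying $m_1=2g$ exactly, namely that $C$ is induced. For a shortest cycle this is standard, since any chord splits $C$ into two shorter cycles, one of length less than $g$. One should also check that the argument only uses $|X_j|\le 2g<n$, so no parity issue at the final step arises. The hypothesis on the positive integer $k$ is vacuous and plays no role.
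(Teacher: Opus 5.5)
Your proof is correct and matches the refinement the paper has in mind. In a $4$-regular graph each moved vertex has at least three neighbours in $X_i$, so the cut drops by at least $2$ per step, at most $g(G)$ vertices are added, and $|X_j|\le 2g(G)<n$. The step you flag as the main obstacle is not needed: $m_1\le 2g(G)$ already follows because each cycle vertex has at least two neighbours on $C$, so you never have to show that $C$ is induced.
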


By the Moore bound (see for example \cite{erdos_moore}), any $4$-regular graph $G$ of girth $g(G)$ has at least
$2g(G)+1$ vertices, except $K_5$ and $K_{4,4}$. As the case for $K_{4,4}$ is clear (cf. Fig.~\ref{fig:ex_substar_cubic}(b)), we may say
that our results are consistent with \cite[Proposition 3]{BTV-06}, in the language of satisfactory partitions and 
%the corresponding result obtained in \cite{BTV-06} in the language of satisfactory partitions. 
our approach 
%from the proof of Theorem~\ref{thm:partitionable}(ii) 
provides an alternative proof for that result.

\section{Concluding discussions}  \label{sec:final}
In this paper, we introduced the majority C-coloring and initiated a research on this topic. To close the paper, we discuss some possible applications of this new model and also pose two natural directions, namely the $\mc$-edge-stability and $\mc$-edge-criticality, for further research. 
\subsection{Possible applications of majority C-colorings} \label{applications}

Majority C-colorings provide a natural framework for modeling systems in which local decisions must be consistent with prevailing choices in the neighborhood. Such situations arise in networked settings where coordination between adjacent units is essential, including spatial planning, public service allocation, infrastructure coordination, and models of social or organizational behavior. In this framework, vertices represent interacting units, edges encode dependencies, and colors correspond to alternative configurations or policies. The majority condition ensures local stability by preventing isolated or conflicting assignments, while the parameter $\mc(G)$ measures the maximum diversity of configurations compatible with this form of local consistency.

A particularly natural application arises in the organization of urban services. Service zones are modeled as vertices, adjacency reflects infrastructural or operational dependencies, and colors represent different service providers, schedules, or standards. The majority condition guarantees that each zone is coordinated primarily with neighboring zones operating under the same configuration, leading to locally stable service allocation. In this context, $\mc(G)$ quantifies how many distinct service configurations can be deployed simultaneously without fragmenting the system or increasing coordination costs. For example, in waste collection or street maintenance, $\mc(G)$ indicates how many different schedules or operators can be introduced without fragmenting the service system and increasing coordination complexity.

\paragraph{Relation to Schelling's model of segregation}
Schelling's model of segregation is an agent-based model developed by economist Thomas Schelling in~\cite{schelling}. Originally, on a graph of locations, agents of two types occupy vertices, leaving some vacant. An agent is ``unhappy" if the fraction of same-type neighbors falls below a tolerance threshold $\tau\in [0,1]$. Unhappy agents move to vacant nodes, and these repeated individual shifts eventually produce large-scale segregated patterns. It is worth to mention that every agent is not guaranteed to be satisfied and in these cases, it is of interest to study the patterns (if any) of the agent dynamics. 
Although Schelling's model does not include external factors that force agents to segregate, it demonstrates how even slight in-group preferences can lead to extreme social segregation.
Even if individuals do not mind diversity, a small preference for neighbors of the same ``type'' (race or economic status) triggers a chain reaction of self-segregation.  

By integrating Schelling's model with majority C-coloring, we can replace moving with repainting. In our model every node is occupied (no vacancies). Agents evaluate their neighborhood. If the fraction of same-type neighbors is less than $\tau$, rather than move they change their type (color) to the majority type among neighbors. 
So a majority C-coloring  corresponds to a situation when $\tau=0.5$ and everybody is satisfied. Considering a graph and its arbitrary coloring with 2 colors, we are able to recolor its vertices one by one in such a way that every vertex becomes satisfied at the end, what means that the obtained coloring is a majority C-coloring.
    
    Beyond thresholds, Schelling's model can be refined using utility functions. In a social network context, this often manifests as edge rewiring rather than movement or color changes.
The agent neither changes its location nor its type (color); instead, it modifies its neighborhood by severing ties with dissimilar agents and forming new connections with similar ones. 
From a graph theory perspective, the goal is to transform an initial graph through a sequence of edge removals and additions until the existing coloring becomes a majority C-coloring.

\subsection{$\mc$-edge-stabilty and $\mc$-edge-criticality }
In analogy with classical graph coloring, it is natural to consider the behaviour of the parameter $\mc(G)$ under edge deletions (cf.~\cite{edge-stab}). In what follows, we address this question from two perspectives, which we refer to as $\mc$-edge-stability and $\mc$-edge-criticality. These two concepts naturally suggest further directions for the study of majority C-colorings.

\paragraph{$\mc$-edge-stabilty.} The stability of the $\cg$-chromatic number under edge deletion can be quantified by considering the minimum number of edges whose removal changes the value of $\mc(G)$ ($es_{\mc}(G)$). Such stability parameters, commonly referred to in the literature as edge stability, measure the robustness of graph invariants with respect to edge deletions. They provide a natural direction for further investigation in the context of majority C-colorings. 
This notion captures how robust the parameter 
$\mc(G)$
is under structural perturbations of the graph, and serves as a natural analogue of edge-stability concepts studied for classical coloring invariants. This means that deleting any $es_{\mc}(G)-1$ edges from graph $G$ leaves $\mc(G)$ unchanged.

As an illustrative example, consider the path $P_n$. If $n$ is odd, then $es_{\mc}(P_n)=1$, since deleting a single pendant edge decreases the value of $\mc(P_n)$. 
On the other hand, if $n$ is even, then the removal of any single edge does not change $\mc(P_n)$, and hence $es_{\mc}(P_n)=2$. In this case, removing two edges is sufficient to alter the value of the parameter.

As we already shown in Proposition~\ref{ineq:del_edge}, after deleting an edge from graph $G$ the majority C-chromatic number, $\mc(G-e)$,  can decrease or increase. If we consider deleting more edges, the same rule applies. 

\paragraph{$\mc$-edge-criticality.}
It is natural to ask which classes of graphs have the property that the deletion of any edge necessarily changes the $\cg$-chromatic number. Such graphs with at least one edge will be called $\mc$-\emph{edge-critical graphs}. In other words, $\mc(G)\neq \mc(G-e)$ holds for every edge $e$ if $G$ is $\mc$-edge-critical. 

As an example, consider subdivided stars $S(S_n)$ with odd $n$ (cf. Observation~\ref{obs:simple_graph_classes}(iii). In this case, the deletion of any edge increases the value of $\mc(G)$ by one, and hence such graphs are $\mc$-edge-critical. This property, however, does not hold for subdivided stars with even $n$, where there exist edges whose removal does not affect the value of $\mc(G)$.

Since the deletion of an edge may either increase or decrease the $\cg$-chromatic number (cf. Proposition~\ref{ineq:del_edge}), it is natural to ask whether both cases may occur for $\mc$-edge-critical graphs. The example in the previous paragraph, that is a subdivided star $S(S_n)$ for an odd $n$, shows that there are $\mc$-edge-critical graphs satisfying $\mc(G-e)= \mc(G)+1$ for every $e \in E(G)$. The following proposition shows that the reverse inequality $\mc(G-e) < \mc(G)$ cannot hold for every edge $e$ of a graph $G$.
\begin{proposition} \label{prop:critical}
Every graph $G$ with $E(G)\neq \emptyset$ contains an edge $e$ such that $\mc(G-e) \ge \mc(G)$.  
\end{proposition}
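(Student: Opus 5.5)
The plan is to start from a $\mc$-coloring $\phi$ of $G$ with $k=\mc(G)$ colors, and to show that a suitably chosen edge $e$ admits a $\cg$-coloring of $G-e$ with at least $k$ colors. By Observation~\ref{obs:basic}(ii), this is enough to give $\mc(G-e)\ge k$. The choice of $e$ splits into two cases, according to whether $\phi$ has a non-monochromatic edge.

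\emph{Case 1: some edge $e=xy$ has $\phi(x)\neq\phi(y)$.} We keep $\phi$ on $G-e$.
\begin{itemize}
\item For every vertex other than $x$ and $y$, nothing changes, so condition~\eqref{eq:majority-2} still holds.
\item For $x$, the number of neighbors in its own class is unchanged.
\item The number of neighbors of $x$ in other classes drops by one, so~\eqref{eq:majority-2} still holds for $x$; by symmetry the same is true for $y$.
\end{itemize}
Hence $\phi$ is a $\cg$-coloring of $G-e$ with $k$ colors, and $\mc(G-e)\ge \mc(G)$. Equivalently, in the language of Proposition~\ref{prop:cutdeg}, removing an edge of the cut subgraph $[E']$ leaves a cut-$(k,\leqslant)$ subgraph of $G-e$.

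\emph{Case 2: every edge of $G$ is monochromatic under $\phi$.} Then every component of $G$ lies inside a single color class. By Observation~\ref{obs:basic}(iii), each color class induces a connected subgraph, so the color classes are exactly the components of $G$, and $k$ equals the number of components. Since $E(G)\neq\emptyset$, we can take any edge $e$. The graph $G-e$ has at least as many components as $G$, namely at least $k$. We color $G-e$ by giving each of its components its own color. Every edge of $G-e$ is then monochromatic, so~\eqref{eq:majority-1} holds trivially at every vertex. This yields a $\cg$-coloring of $G-e$ with at least $k$ colors, so again $\mc(G-e)\ge \mc(G)$; this is the argument used in the proof of Proposition~\ref{prop:m(n,k)}.

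No step is a real obstacle. The only point needing care is Case~2, because deleting a monochromatic edge can break the majority condition of $\phi$ at its endpoints. For that reason we do not reuse $\phi$ there, and instead recolor by components.
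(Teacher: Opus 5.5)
Your proof is correct and rests on the same key step as the paper's: deleting a bichromatic edge of a $\mc$-coloring leaves a $\cg$-coloring of $G-e$. The only difference is bookkeeping. The paper first reduces to connected graphs and dismisses $\mc(G)=1$ as trivial, so that a bichromatic edge exists whenever $\mc(G)\ge 2$. Your Case~2 instead handles the all-monochromatic situation directly by coloring the components of $G-e$, which avoids that reduction. The appeal to Observation~\ref{obs:basic}(iii) there is not even needed, since each color class is already a union of components.
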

\begin{proof}
  It suffices to consider connected graphs. Further, if $\mc(G)=1$, then $\mc(G-e) \ge \mc(G)$ clearly holds for all $e \in E(G)$. Assume now that $\mc(G) \ge 2$ and let $\phi$ be a $\cg$-coloring of $G$ that uses $\mc(G)$ colors. Since $G$ is connected, there exists an edge $e=uv$ such that $\phi(u) \neq \phi(v)$. We claim that $\phi$ remains a $\cg$-coloring in $G-e$ and therefore, $\mc(G-e) \ge \mc(G)$. Indeed, for a vertex $z \in V(G-e)$ and the color class $V_i$ containing $z$, the majority condition~\eqref{eq:majority-1} clearly remains true if $z \notin \{u,v\}$. If $z\in \{u,v\}$, then $N_{G-e}(z) \cap V_i= N_G(z) \cap V_i$ and $\deg_{G-e}(z) = \deg_{G}(z)-1$ ensure that~\eqref{eq:majority-1} is satisfied. 
\end{proof}

By Proposition~\ref{prop:critical}, there are no $\mc$-edge critical graphs satisfying $\mc(G) > \mc(G-e)$ for all $e\in V(G)$. What is more, we conjecture that $\mc(G-e) = \mc(G)+1$ holds for every edge $e \in E(G)$ if $G$ is an $\mc$-edge critical graph.

\begin{conj}\label{conj:criticality}
    Let $G$ be a $\mc$-edge-critical graph with at least two edges and let $e,f \in E(G)$. It is no possible that $\mc(G-e) < \mc(G) < \mc(G-f)$. 
\end{conj}

Our previous considerations provide evidence supporting this conjecture. In particular, it holds for subdivided stars $S(S_n)$ with odd $n$. Moreover, it is easy to verify that odd cycles satisfy the conjecture as well.
\bigskip

The concepts of $\mc$-edge-stability and $\mc$-edge-criticality open up a range of natural questions regarding the behavior of $\mc(G)$ under edge deletions. We conclude by formulating several problems and a conjecture in this direction.

\begin{open}
Determine or estimate the parameter $es_{\mc}(G)$ for various classes of graphs.
\end{open}

\begin{open}
Characterize graphs for which $es_{\mc}(G)=1$.
\end{open}

\begin{open}
    Which classes of graphs are $\mc$-edge-critical?\label{open:crit}
\end{open}

If Conjecture~\ref{conj:criticality} is not true then the following open problem is different from Open problem \ref{open:crit} and it is worth studying separately.
%Since the deletion of an edge may either increase or decrease the $\cg$-chromatic number (cf. Proposition~\ref{ineq:del_edge}), it is natural to distinguish between these two phenomena and study them separately. 

\begin{open}
    Which classes of graphs satisfy $\mc(G) < \mc(G-e)$ for every edge $e\in E(G)$?
\end{open}

\section*{Acknowledgements}
Cs.\ Bujt\'as was supported by the Slovenian Research and Innovation Agency (ARIS) under the grants P1-0297, N1-0355, and J1-70045. 
M.\ Dettlaff and H.\ Furma\'nczyk were supported by the European Commission's Horizon Europe Research and Innovation programme through the Marie Skłodowska-Curie Actions Staff Exchanges (MSCA-SE) under Grant Agreement no.101182819 (COVER: (C)ombinatorial (O)ptimization for (V)ersatile Applications to (E)merging u(R)ban Problems).

\end{document}